\documentclass{amsart}

\usepackage{tikz-cd}
\usepackage[hyphens]{url}

\usepackage{amsthm,amsmath,amssymb,amsfonts,dsfont}
\usepackage[colorlinks, linkcolor=blue, citecolor=blue]{hyperref}
\usepackage[indentafter]{titlesec}
\titleformat{name=\section}{}{\thetitle.}{0.8em}{\centering\scshape}
\usepackage{mathrsfs}
\usepackage{bm}
\usepackage{titlesec}
\numberwithin{equation}{section}
\titleformat{\subsection}[runin]
  {\normalfont\bfseries}{\thesubsection}{1em}{}

\newtheorem{definition}{\textbf{Definition}}[section]

\newtheorem{lemma}[definition]{\textbf{Lemma}}
\newtheorem{proposition}[definition]{\textbf{Proposition}}

\newtheorem{remark}[definition]{\textbf{Remark}}
\newtheorem{letterthm}{Theorem}

\newtheorem*{conjecture}{\textbf{Conjecture}}

\def\wx{\infty}

\def\car{\curvearrowright}
\def\supp{\mathrm{supp \, }}

\def\prob{\mathrm{Prob}}

\def\N {\mathbb{N}}
\def\R {\mathbb{R}}
\def\C {\mathbb{C}}

\newcommand{\rt}{\rtimes}

\newcommand{\EP}{E}

\newcommand{\Proj}{\operatorname{Proj}}

\newcommand{\cO}{\mathcal O}

\title[On invariant subalgebras of noncommutative Poisson boundaries]{On invariant subalgebras of noncommutative Poisson boundaries for higher rank lattices}

\author{Shuoxing Zhou}
\address{\'Ecole Normale Sup\'erieure\\ D\'epartement de math\'ematiques et applications\\ 45 rue d'Ulm\\ 75230 Paris Cedex 05\\ FRANCE}
\email{shuoxing.zhou@ens.psl.eu}

\begin{document}
\begin{abstract}
Let $G$ be a real connected semisimple Lie group with trivial center, no non-trivial compact factors, and all simple factors of real rank at least two. Let $\Gamma<G$ be an irreducible lattice and $P<G$ be a minimal parabolic subgroup. Amrutam--Hartman conjectured that every $\Gamma$-invariant von Neumann subalgebra $M\subset L^\infty(G/P,\nu_P)\rtimes \Gamma$ is of the form $L^\infty(G/Q,\nu_Q)\rtimes\Lambda$. We prove this classification whenever either $M\cap L^\infty(G/P,\nu_P)\not=\C 1$ or $M\cap L(\Gamma)\not =\C 1$, thereby reducing the Amrutam--Hartman conjecture to the extreme case $M\cap L^\infty(G/P,\nu_P)=M\cap L(\Gamma)=\C1$.
\end{abstract}

\maketitle

\section{Introduction}\label{introduction}

Let $G=\prod_{i=1}^d G_i$ be a real connected semisimple Lie group with trivial center, no non-trivial compact factors, and $\mathrm{rank}_{\R}(G_i)\geq 2$ for each simple factor $G_i$. Let $\Gamma<G$ be an irreducible lattice. Let $P<G$ be a minimal parabolic subgroup, let $K<G$ be a maximal compact subgroup with $G=KP$, and let $\nu_P\in\prob(G/P)$ be the unique $K$-invariant probability measure on $G/P$. Furstenberg \cite{Fur67} proved that there exists a fully supported measure $\mu\in\prob(\Gamma)$ such that the $(\Gamma,\mu)$-Poisson boundary is exactly $(G/P,\nu_P)$.

The noncommutative Poisson boundary $L^\wx(G/P,\nu_P)\rtimes \Gamma$ is closely related to Connes' rigidity conjecture for higher rank lattices; see, for example, \cite{Hou24}. Thus it is natural to study the algebraic structure of $L^\wx(G/P,\nu_P)\rtimes \Gamma$.

In this paper, we study von Neumann subalgebras $M$ of $L^\wx(G/P,\nu_P)\rtimes \Gamma$ which are invariant under the conjugation action of $\Gamma$, i.e., $u_gMu_g^*=M$ for all $g\in\Gamma$, where $u_g$ is the canonical group unitary corresponding to $g$.

Margulis' factor theorem \cite[Theorem IV.2.11]{Mar91} shows that any $\Gamma$-invariant von Neumann subalgebra of $L^\wx(G/P,\nu_P)$ is of the form $L^\wx(G/Q,\nu_Q)$ for some $P\leq Q\leq G$.

It was proved in \cite{BH19}, as the noncommutative Nevo--Zimmer theorem, that any ergodic $\Gamma$-von Neumann algebra $M$ with a normal $\Gamma$-equivariant ucp map $\Phi:M\to L^\wx(G/P,\nu_P)$ must satisfy that either $\Phi$ is $\Gamma$-invariant, or there exists a normal embedding $\iota:L^\wx(G/Q,\nu_Q)\hookrightarrow M$ for some proper parabolic subgroup $P\leq Q\lneq G$ such that $\Phi\circ\iota$ is the canonical embedding of $L^\wx(G/Q,\nu_Q)$ into $L^\wx(G/P,\nu_P)$.

It was proved in \cite{KP23} that every $\Gamma$-invariant von Neumann subalgebra $M\subset L(\Gamma)$ is of the form $M=L(\Lambda)$ for some normal subgroup $\Lambda\lhd \Gamma$.

It was proved in \cite{AH24} that, for every proper parabolic subgroup $P\leq Q\lneq G$, every $\Gamma$-invariant intermediate subalgebra
$$
  L^\wx(G/Q,\nu_Q)\subset M\subset L^\wx(G/Q,\nu_Q)\rtimes \Gamma
$$
is of the form $M=L^\wx(G/Q,\nu_Q)\rtimes \Lambda$ for some normal subgroup $\Lambda\lhd \Gamma$.

Both the results in \cite{KP23} and \cite{AH24} rely essentially on the noncommutative Nevo--Zimmer theorem \cite{BH19}. In particular, their arguments require the higher rank assumption of each simple factor of $G$ and cannot be weakened to allow simple factors of rank one.

It was proved in \cite{BH22} that every intermediate subalgebra
$$
  L(\Gamma)\subset M\subset L^\wx(G/P,\nu_P)\rtimes \Gamma
$$
is of the form $M=L^\wx(G/Q,\nu_Q)\rtimes \Gamma$ for some parabolic subgroup $P\leq Q\leq G$. Here the proof essentially relies on results from \cite{BBHP21,BBH22}. Hence the higher rank assumption in \cite{BH22} can be weakened to $G$ being a semisimple algebraic group whose total rank, instead of the rank of each simple factor, is at least two.

Summarizing the results above, in \cite{AH24}, Amrutam--Hartman also conjectured that every $\Gamma$-invariant von Neumann subalgebra $M\subset L^\wx(G/P,\nu_P)\rtimes \Gamma$ is of the form $M=L^\wx(G/Q,\nu_Q)\rtimes \Lambda$ for some parabolic subgroup $P\leq Q\leq G$ and some normal subgroup $\Lambda\lhd \Gamma$. Our main result proves this classification whenever $M\cap L(\Gamma)$ or $M\cap L^\infty(G/P,\nu_P)$ is non-trivial.

\begin{letterthm}\label{thmA}
Let $G$, $\Gamma$ and $P$ be as above. Let $M\subset L^\infty(G/P,\nu_P)\rtimes \Gamma$ be a $\Gamma$-invariant von Neumann subalgebra. Assume that either
\begin{itemize}
  \item $M\cap L(\Gamma)\not= \C 1$;
  \item or $M\cap L^\infty(G/P,\nu_P)\not= \C 1$.
\end{itemize}
Then there exist a parabolic subgroup $P\leq Q\leq G$ and a normal subgroup $\Lambda\lhd \Gamma$ such that
$$
  M=L^\infty(G/Q,\nu_Q)\rtimes \Lambda .
$$
\end{letterthm}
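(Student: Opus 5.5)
Write $A=L^\infty(G/P,\nu_P)$ and $N=A\rtimes\Gamma$, with $E_A:N\to A$ the canonical conditional expectation. The plan is to reduce both cases to the known classifications of \cite{AH24} and \cite{KP23} (and \cite{BH22}), by producing inside $M$ a large enough "anchor" subalgebra.

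\textbf{Case 1: $M\cap A\neq\C1$.} The subalgebra $M\cap A$ is a $\Gamma$-invariant von Neumann subalgebra of $A$. By Margulis' factor theorem it equals $L^\infty(G/Q,\nu_Q)$ for some $P\le Q\lneq G$, with $Q\neq G$ since $M\cap A\neq\C1$. Set $B=L^\infty(G/Q,\nu_Q)$.

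I will then show that $M$ lies inside $B\rtimes\Gamma$, i.e. that every Fourier coefficient $E_A(xu_g^*)$ of an element $x\in M$ lies in $B$. I expect this to be the main obstacle. First I would try the following argument. For $b\in B\subset M$ and $x\in M$, the commutator $[b,x]\in M$ has Fourier coefficients $(b-\sigma_g(b))E_A(xu_g^*)$. Consider the von Neumann algebra generated by $M\cap A$ together with all coefficients $E_A(xu_g^*)$ for $x\in M$. One wants this to remain in $A\cap(\text{something }\Gamma\text{-invariant})$ and to equal $B$. The tool should be the noncommutative Nevo--Zimmer theorem \cite{BH19}, applied to the $\Gamma$-von Neumann algebra $M$ equipped with the ucp map $E_A|_M:M\to A$ (which is $\Gamma$-equivariant since $M$ is $\Gamma$-invariant). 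After restricting to the ergodic case, this theorem gives embeddings $L^\infty(G/Q')\hookrightarrow M$ compatible with $E_A$. Maximality of $Q$ should then force the factors seen by $E_A(M)$ to lie under $B$. A cleaner route may be to use relative commutant or Pimsner--Popa type estimates: $M$ is $B$-bimodular, and combining this with the essential freeness of $\Gamma\curvearrowright G/Q$ lets one transfer the structure.

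Once $B\subset M\subset B\rtimes\Gamma$ is established, \cite{AH24} gives $M=B\rtimes\Lambda$ with $\Lambda\lhd\Gamma$.

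\textbf{Case 2: $M\cap L(\Gamma)\neq\C1$.} By \cite{KP23}, $M\cap L(\Gamma)=L(\Lambda_0)$ for some nontrivial normal subgroup $\Lambda_0\lhd\Gamma$. By Margulis' normal subgroup theorem, $\Lambda_0$ has finite index, and it is again an irreducible lattice in $G$. Now $L(\Lambda_0)\subset M\subset A\rtimes\Gamma$. Decompose $A\rtimes\Gamma$ over the finite coset space $\Gamma/\Lambda_0$, and cut $M$ down to $M_0=M\cap(A\rtimes\Lambda_0)$. This $M_0$ is an intermediate algebra $L(\Lambda_0)\subset M_0\subset A\rtimes\Lambda_0$, so by \cite{BH22} applied to $\Lambda_0$ we get $M_0=L^\infty(G/Q,\nu_Q)\rtimes\Lambda_0$.

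If $Q\neq G$, then $M\cap A\neq\C1$ and we are back in Case 1. If $Q=G$, then $M_0=L(\Lambda_0)$. It remains to handle the finitely many coset pieces $M\cap Au_g$, showing that they are of the form $\C u_g$ or $0$. The plan is to use the $\Lambda_0$-bimodularity of each such piece together with ergodicity/mixing of $\Lambda_0$ on $G/P$; this forces the coefficients to be constant. Then $M=L(\Lambda)$ for the normal subgroup $\Lambda=\{g: u_g\in M\}$.
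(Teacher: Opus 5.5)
Your overall skeleton matches the paper's: Case 1 reduces to \cite[Theorem 1.2]{AH24}, and Case 2 goes through \cite{KP23} and the normal subgroup theorem. Applying \cite{BH22} to $M_0=M\cap(A\rtimes\Lambda_0)$ is a legitimate way to decide whether $M\cap A$ is trivial. However, both cases stop where the real work begins.

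\textbf{Case 1.} The claim $M\subset B\rtimes\Gamma$ is the entire content of Theorem~\ref{E(M) not C}, and the tools you suggest do not reach it. The noncommutative Nevo--Zimmer theorem applied to $E_A|_M$, together with maximality of $Q$, only controls $E_A(M)$. That is $M\cap A=B$ anyway, by Lemma~\ref{Lemma AH}, which you never invoke. The maps $x\mapsto E_A(xu_s^*)$ with $s\neq e$ are not $\Gamma$-equivariant, so that theorem says nothing about them. The paper needs three steps here.
\begin{itemize}
\item Using $B\subset M$, essential freeness of $\Gamma\curvearrowright G/Q$ and Suzuki's partition lemma, one shows $E_A(xu_s^*)u_s\in M$ for $x\in M$. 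Hence $M$ is the weak closure of $\bigoplus_s W_su_s$, where $W_s=\{b\in A: bu_s\in M\}$.
\item The relation $W_sW_s^*+W_s^*W_s\subset B$, together with the $B$-bimodule structure, forces $W_s=Bp_sv_s$ with $p_s\in B$ a projection and $v_s\in A$ a partial isometry. Then \cite[Theorem 1.2]{AH24}, applied to the auxiliary algebra generated by the $Bp_su_s$, produces $\Lambda$ with $p_s=1$ on $\Lambda$ and $0$ off it.
\item Nothing so far keeps the phases $v_s$ inside $B$: an element $vu_s$ with $v\in\mathcal U(A)\setminus\mathcal U(B)$ is perfectly compatible with $M\cap A=B$. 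The paper shows that $D_s(x_1,x_2)=v_s(x_1)\overline{v_s(x_2)}$ on $Z=G/P\times_{G/Q}G/P$ satisfies cocycle identities forcing each $D_r$ to be $\Lambda$-invariant. It then gets ergodicity of $\Lambda\curvearrowright Z$ from the relative Bruhat factor map $G/P\times G/P\to Z$ and double ergodicity of $(G/P,\nu_P)$ as a Poisson boundary of $\Lambda$. This gives $D_r=1$, i.e. $v_r\in B$.
\end{itemize}
None of these ideas is in your proposal beyond a passing mention of essential freeness.

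\textbf{Case 2 with $Q=G$.} Here the plan is structurally flawed. The spaces $M\cap Au_g$ are single Fourier modes, not coset pieces. Even the coset pieces need not generate $M$, since you have no $M$-preserving projection onto them. The partition trick above needs a nontrivial $B\subset M$, and here $M\cap A=\C1$. An element $\sum_{s\in g\Lambda_0}a_su_s\in M$ with infinitely many nonzero coefficients is not detected by the spaces $M\cap Au_g$. So showing $M\cap Au_g\in\{0,\C u_g\}$ would not determine $M$.

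What must be proved is that every Fourier coefficient of every element of $M$ is constant, i.e. $M\subset L(\Gamma)$; then $M=M\cap L(\Gamma)=L(\Lambda_0)$. \emph{Bimodularity plus ergodicity} does not deliver this. The paper (Theorem~\ref{E(M)=C nontrivial}) proceeds as follows.
\begin{itemize}
\item It first gets $E(M)=\C1$ from Lemma~\ref{Lemma AH}.
\item It uses this to turn the crossed-product Parseval identity into pointwise isometries $T_x\colon H_z\to\ell^2(\Gamma)$, $T_x\widehat a=(E(u_s^*a)(x))_{s\in\Gamma}$. These intertwine $\Lambda_0$ with $\lambda$ and satisfy $T_{gx}=C_gT_xU_g^*$.
\item It applies metric ergodicity of $(G/P,\nu_P)$ to $T_xT_x^*$ in the finite factor $L(\Lambda_0)'\cap B(\ell^2(\Gamma))\cong R(\Lambda_0)\overline\otimes M_n(\C)$. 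This is where $[\Gamma:\Lambda_0]<\infty$ is essential.
\item It applies metric ergodicity again to $T_x$ itself, viewed in the unitary group of a corner of that factor.
\end{itemize}
This makes $T_x$ constant, hence all Fourier coefficients constant.
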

Let $E:L^\wx(G/P,\nu_P)\rtimes \Gamma\to L^\wx(G/P,\nu_P)$ be the canonical $\Gamma$-equivariant conditional expectation. The following lemma from \cite{AH24} reduces the proof of Theorem~\ref{thmA} to two cases: $E(M)=\C 1$, and $E(M)=L^\wx(G/Q,\nu_Q)$ for some proper parabolic subgroup $P\leq Q\lneq G$.
\begin{lemma}[ {\cite[Lemma 5.5]{AH24}}]\label{Lemma AH}
Let $M\subset L^\wx(G/P,\nu_P)\rtimes \Gamma$ be a $\Gamma$-invariant von Neumann subalgebra. Then we always have
$$E(M)\subset M.$$
In particular,
$$E(M)=M\cap L^\wx(G/P,\nu_P)$$
is a $\Gamma$-invariant von Neumann subalgebra of the form $E(M)=L^\wx(G/Q,\nu_Q)$.
\end{lemma}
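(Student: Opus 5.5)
The plan is to show directly that for every $x\in M$, the diagonal part $E(x)$ can be recovered from the $\Gamma$-conjugates of $x$ by weak$^*$-continuous convex operations, which keep us inside $M$. The second assertion then follows formally. Once $E(M)\subset M$, we get $E(M)\subset M\cap L^\wx(G/P,\nu_P)$, and the reverse inclusion is trivial since $E$ is the identity on $L^\wx(G/P,\nu_P)$. The set $E(M)=M\cap L^\wx(G/P,\nu_P)$ is then a $\Gamma$-invariant von Neumann subalgebra of $L^\wx(G/P,\nu_P)$, because $E$ is $\Gamma$-equivariant. Margulis' factor theorem \cite[Theorem IV.2.11]{Mar91} then gives $E(M)=L^\wx(G/Q,\nu_Q)$ for some $P\leq Q\leq G$.

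For the main claim, write $x=\sum_{h\in\Gamma} x_h u_h$ with $x_h\in L^\wx(G/P,\nu_P)$, so that $E(x)=x_e$ and
$$u_g x u_g^*=\sigma_g(x_e)+\sum_{h\neq e}\sigma_g(x_h)u_{ghg^{-1}}.$$
Let $y=x-E(x)$. The conjugation action of $\Gamma$ on the off-diagonal part $L^2(G/P,\nu_P)\bar\otimes\ell^2(\Gamma\setminus\{e\})$ decomposes as a direct sum, over nontrivial conjugacy classes $[h]$, of representations induced from the centralizers $C_\Gamma(h)$ acting on $L^2(G/P,\nu_P)$. Since $\Gamma$ is ICC, every such class is infinite, so $C_\Gamma(h)$ has infinite index. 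Using this together with the ergodicity/mixing of the $\Gamma$-action on $(G/P,\nu_P)$, I would show that this off-diagonal representation has no nonzero $\Gamma$-invariant vectors and, better, no almost invariant vectors. A natural tool is Howe--Moore for $G$, transferred to $\Gamma$ via the Mackey realization of $L^2(G/P)$ as an induced representation.

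With this in hand, I would take convex combinations $\sum_i c_i u_{g_i} x u_{g_i}^*$, which lie in $M$ and are bounded in norm by $\|x\|$. The aim is to choose them so that the off-diagonal part tends to $0$ in $\|\cdot\|_2$, while the diagonal part $\sum_i c_i\sigma_{g_i}(x_e)$ tends to $x_e$. A weak$^*$ cluster point then lies in $M$ and equals $x_e=E(x)$. The cleanest way to get both conditions simultaneously is to work in $M\bar\otimes B(\ell^2\Gamma)$, or with the $\Gamma$-$\Gamma$ bimodule $L^2(L^\wx(G/P)\rtimes\Gamma)$ twisted by the diagonal. Another option is to use the elements $u_g x u_g^*\cdot$ combined with elements of $M$ already known to be diagonal, beginning with the constants.

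The main obstacle is controlling the diagonal term: unless $x_e$ is $\Gamma$-invariant, plain averaging over conjugates also moves $x_e$. In a nonamenable group there is no Følner averaging that both kills the off-diagonal part and fixes $\sigma_g(x_e)$ in the limit. I would first try restricting the averaging to a suitable amenable (e.g.\ cyclic or abelian) subgroup $A<\Gamma$ and replacing $x$ by the conjugates by $A$. Here the relevant inputs would be that $A$ acts on $L^2(G/P)$ with controlled (e.g.\ compact-type) behaviour on the diagonal, while the induced representations from $C_\Gamma(h)\cap A$ still have no invariant vectors. If this fails, I would fall back on the boundary-theoretic approach of \cite{AH24}: apply a $\Gamma$-equivariant ucp map arising from the $(\Gamma,\mu)$-Poisson boundary structure to convert the averaging into a Poisson-transform argument.
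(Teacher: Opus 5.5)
Your deduction of the second assertion from the first is fine. The inclusion $E(M)\subset M$ itself, however, is not proved. The paper does not reprove it either; it quotes it from \cite{AH24}. You identify the obstacle and leave it open, and the proposed fixes cannot work.

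Conjugation by $u_g$ moves the diagonal coefficient $x_e$ by the same automorphism $\sigma_g$ that moves the off-diagonal part. Suppose convex combinations $\sum_i c_i u_{g_i}xu_{g_i}^*$ converge weak$^*$ to $E(x)$. Applying $E$ gives $\sum_i c_i\sigma_{g_i}(x_e)\to x_e$; for $x_e=\mathds{1}_B$ this means $\sum_i c_i\,\nu_P(g_iB\triangle B)\to 0$. So the weights must sit on elements almost fixing $x_e$. Killing the off-diagonal part, by contrast, requires them to escape to infinity: for $x=x_e+u_h$ the weight on every coset of $C_\Gamma(h)$ must tend to $0$.

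Since $\Gamma$ is discrete in $G$, and $g\nu_P$ accumulates only at measures singular to $\nu_P$ as $g\to\infty$ in $G$, no infinite subset of $\Gamma$ has relatively compact image in $\mathrm{Aut}(G/P,[\nu_P])$. Hence there is no infinite amenable subgroup with `compact-type behaviour on the diagonal', and nothing in your outline reconciles the two requirements. The Howe--Moore/spectral-gap input only addresses the off-diagonal half. Also, $\widehat a\mapsto\widehat{u_gau_g^*}$ is not unitary on $L^2$, since $\nu_P$ is only quasi-invariant. Structurally, your scheme only uses that $M$ is a $\Gamma$-invariant weak$^*$-closed convex set, never that it is closed under products, and that is where the content lies.

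The missing idea is to average with unitaries that commute with $x_e$, i.e.\ with projections of $L^\wx(G/P,\nu_P)$ that already lie in $M$. If $L^\wx(G/Q,\nu_Q)\subset M$ for some proper $Q$, then essential freeness of $\Gamma\car G/Q$ and \cite[Lemma 3.1]{Suz18} give $E(x)=\text{wo-}\lim\sum_i p_ixp_i\in M$. This is exactly the case $s=e$ of the proof of Lemma~\ref{lem:slice-relations}.

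Such projections come from the noncommutative Nevo--Zimmer theorem \cite{BH19}, applied to the normal $\Gamma$-equivariant ucp map $E|_M:M\to L^\wx(G/P,\nu_P)$. You would first need to check that $M$ is ergodic: $M^\Gamma$ sits in $L(\Gamma)'\cap(L^\wx(G/P,\nu_P)\rtimes\Gamma)$, which should be $\C1$ because $\Gamma$ is ICC and $(G/P,\nu_P)$ is a Poisson boundary. The theorem then gives two cases.
\begin{itemize}
\item If $E|_M$ is $\Gamma$-invariant, then $E(M)\subset L^\wx(G/P,\nu_P)^\Gamma=\C1\subset M$.
\item Otherwise there is a normal unital embedding $\iota:L^\wx(G/Q,\nu_Q)\to M$, with $Q\lneq G$, such that $E\circ\iota$ is the canonical inclusion. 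Then $y=\iota(f)$ satisfies $E(y^*y)=E(y)^*E(y)$. By bimodularity of $E$ this gives $E\bigl((y-E(y))^*(y-E(y))\bigr)=0$, and faithfulness of $E$ forces $\iota(f)=f$. Thus $L^\wx(G/Q,\nu_Q)\subset M$, and the partition averaging above yields $E(M)\subset M$.
\end{itemize}
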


For the case $E(M)=\C 1$, if $M\cap L(\Gamma)\not= \C 1$, then the result of \cite{KP23} gives a nontrivial normal subgroup $\Lambda\lhd\Gamma$ such that $M\cap L(\Gamma)=L(\Lambda)$. The following theorem settles the branch.

\begin{letterthm}\label{E(M)=C nontrivial}
Let $M\subset L^\infty(G/P)\rtimes\Gamma$ be a $\Gamma$-invariant von Neumann subalgebra such that
$$
  E(M)=\C1
$$
and
$$
  M\cap L(\Gamma)=L(\Lambda)\neq\C1
$$
for a non-trivial normal subgroup $\{e\}\not=\Lambda\lhd\Gamma$. Then 
$$
  M=L(\Lambda).
$$
\end{letterthm}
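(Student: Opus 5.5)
Write $A=L^\infty(G/P,\nu_P)$ and $\mathcal M=A\rtimes\Gamma$. The plan is to show that every $x\in M$ has Fourier coefficients supported in $\Lambda$, and that those coefficients are scalars. Expand
$$
x=\sum_{g\in\Gamma}x_g u_g,\qquad x_g=E(xu_g^*)\in A .
$$
It suffices to prove $x_g\in\C1$ for all $g$, and $x_g=0$ for $g\notin\Lambda$; then $x\in L(\Lambda)$, and the reverse inclusion $L(\Lambda)\subset M$ is given.

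\textbf{Step 1: coefficients on $\Lambda$ are scalars.} For $h\in\Lambda$ we have $u_h\in M$, so $xu_h^*\in M$. Hence $x_h=E(xu_h^*)\in E(M)=\C1$.

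\textbf{Step 2: coefficients off $\Lambda$ vanish.} Fix $g\notin\Lambda$ and replace $x$ by $x-\sum_{h\in\Lambda}x_hu_h$, which still lies in $M$. So we may assume $x_h=0$ for all $h\in\Lambda$. For $s\in\Lambda$ the elements $u_s x u_s^*$ and $u_s^*x u_s$ lie in $M$. More usefully, for $f\in\Gamma$ and $s\in\Lambda$ we have $E(xu_f^*u_s)\in\C1$ because $u_s\in M$, and this gives
$$
x_{fs^{-1}}=E(xu_{fs^{-1}}^*)\in\C1 .
$$
This only reproves Step 1 when $f\in\Lambda$, so it is not enough. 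Instead, use multiplication by $u_s$ on the \emph{same} side together with conjugation by $\Gamma$. For $y\in M$ and $s\in\Lambda$, the element $u_s^*y$ lies in $M$, so $E(u_s^*y)\in\C1$, i.e.\ $\sigma_{s^{-1}}(y_s)\in\C1$. Applying this to $y=u_g^*xu_g$ and to products $ya$ will force scalar conditions on $x_{gs}$-type coefficients.

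The key rigidity input is normality: $\Lambda\lhd\Gamma$ and $\Lambda\neq\{e\}$. Since $\Gamma$ has higher rank, Margulis' normal subgroup theorem makes $\Lambda$ a finite-index subgroup of $\Gamma$, hence again a lattice in $G$. Consequently $\Lambda$ acts ergodically, and in fact properly ergodically, on $(G/P,\nu_P)$. I would then use the following claim.
\begin{quote}
\emph{Claim.} $M\subset A\rtimes\Lambda$.
\end{quote}
To prove it, pick $\omega\in\Gamma/\Lambda$ (a finite set) and let $x^\omega=\sum_{g\in\omega}x_gu_g$. The map $x\mapsto x^\omega$ is implemented by averaging over the dual of the finite group $\Gamma/\Lambda$ when that group is abelian; in general I would use the projections onto $\Lambda$-cosets, defined via the $\Lambda$-bimodule structure. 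Fix $g\in\omega\neq\Lambda$. The element
$$
z=E\bigl((x^\omega)^*x^\omega\bigr)=\sum_{g\in\omega}\sigma_{g^{-1}}\bigl(|x_g|^2\bigr)
$$
should be $\Lambda$-invariant up to the structure, and one wants to show $z$ lies in $E(M)$. This would give $z\in\C1$. Combined with properness of the $\Lambda$-action (the nonvanishing coefficients would yield a $\Lambda$-equivariant correspondence between $A$ and $u_gAu_g^*$ twisted by $g$), this should force $x^\omega=0$.

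\textbf{Step 3: scalar coefficients inside $A\rtimes\Lambda$.} Once the Claim holds, $M\subset A\rtimes\Lambda$ and $M$ contains $L(\Lambda)$. The coefficient $x_h$ for $h\in\Lambda$ equals $E(xu_h^*)$ with $xu_h^*\in M$, so $x_h\in E(M)=\C1$. Hence $M=L(\Lambda)$.

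\textbf{Main obstacle.} The hard step is the Claim, i.e.\ showing that $M$ is contained in $A\rtimes\Lambda$. My first attempt would go as follows. For $x\in M$ and $a\in A$, the relative commutant structure gives that $\mathbb{E}_{A\rtimes\Lambda}$-type projections of $x$ commute with $L(\Lambda)$ up to the twist by $g$. For $x^\omega$ with $\omega=g\Lambda$, the element $u_g^*x^\omega$ lies in $A\rtimes\Lambda$ and is normalized twistedly by $L(\Lambda)$ via $\mathrm{Ad}(g)$. If it were nonzero, one would produce a nonzero $\Lambda$-equivariant ucp map to $A$ that is twisted by $g$. Then $A$ would carry a $\Lambda$-invariant state related to $g$, contradicting Step 1 (via $E(M)=\C1$) or the ergodicity and amenability properties of the boundary action. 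If this fails, I would instead invoke the noncommutative Nevo--Zimmer theorem of \cite{BH19} for the ergodic $\Gamma$-algebra $M$ with the map $E|_M$. This yields either $\Gamma$-invariance of $E|_M$, which here is automatic since $E(M)=\C1$, or an embedded boundary factor. Then I would argue that $M$ carries the invariant trace $\tau$, and use the character rigidity of \cite{BH19,KP23} on the $\Gamma$-invariant trace of $M$ to identify $M$ with the subalgebra of $L(\Gamma)$.
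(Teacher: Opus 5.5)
Your Steps 1 and 3 are correct, but the whole content of the theorem lies in the Claim $M\subset A\rtimes\Lambda$, and the mechanism you sketch for it cannot work. In Step 2, and in your fallback, you only use $E(M)=\C1$, $L(\Lambda)\subset M$, normality and finite index of $\Lambda$, and ergodic properties of $\Gamma\car G/P$. You never use that $M\cap L(\Gamma)$ is \emph{no larger} than $L(\Lambda)$. Yet for any normal subgroup $\{e\}\neq\Lambda\subsetneq\Gamma$, the algebra $M=L(\Gamma)$ satisfies every hypothesis you use and is not contained in $A\rtimes\Lambda$. Take $x=u_g$ with $g\notin\Lambda$: then $x^\omega=u_g\in M$, $E((x^\omega)^*x^\omega)=1$ is scalar, and no properness or ergodicity argument can make it vanish. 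So the inference ``$E((x^\omega)^*x^\omega)\in\C1$ plus properness $\Rightarrow x^\omega=0$'' is false. The vanishing of off-$\Lambda$ coefficients can only come from the hypothesis $M\cap L(\Gamma)=L(\Lambda)$, and that hypothesis becomes usable once you know those coefficients are scalars.

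Two further points fail. First, $x^\omega\in M$ is never justified: multiplication by $L(\Lambda)$ preserves each $\Lambda$-coset and $\Gamma$-conjugation only permutes cosets, so nothing available isolates a single coset. Second, the fallback via \cite{BH19} is vacuous. With $E(M)=\C1$ the first Nevo--Zimmer alternative holds trivially, and you give no reason why the $\Gamma$-invariant state $\varphi|_M$ should be tracial ($\nu_P$ is only quasi-invariant). Since that route never uses $\Lambda\neq\{e\}$, if it worked it would also prove the paper's open Conjecture.

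The missing idea is to prove directly that every Fourier coefficient $E(u_s^*z)$, $s\in\Gamma$, of every $z\in M$ is a constant function, i.e.\ $M\subset L(\Gamma)$; then $M=M\cap L(\Gamma)=L(\Lambda)$. The paper does this by evaluating coefficients pointwise. Because $E(a^*b)=\varphi(a^*b)1$ for $a,b\in M$, Parseval shows that $\widehat a\mapsto (E(u_s^*a)(x))_{s\in\Gamma}$ defines, for a.e.\ $x\in G/P$, an isometry $T_x$ from the separable space $H_z=\overline{\mathrm{span}}\{\widehat{u_h\sigma_g(z)}\}$ into $\ell^2(\Gamma)$. These isometries satisfy $T_{gx}=C_gT_xU_g^*$, and $T_xL_h=\lambda_hT_x$ for $h\in\Lambda$. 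Finite index of $\Lambda$ is where $\Lambda\neq\{e\}$ and Margulis' theorem are used: it makes $L(\Lambda)'\cap B(\ell^2(\Gamma))\cong R(\Lambda)\,\overline{\otimes}\,M_n(\C)$ a finite factor. Hence the range projections $T_xT_x^*$, and then the intertwiners $T_x$ themselves, take values in separable metric spaces (with the $2$-norm) on which $\Gamma$ acts isometrically. Metric ergodicity of the Poisson boundary \cite{Ka03} then forces $T_x$ to be a.e.\ constant, so every coefficient of $z$ is a scalar.
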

The proof uses metric ergodicity of $\Gamma\curvearrowright (G/P,\nu_P)$, together with the finite-index conclusion $[\Gamma:\Lambda]<\infty$ from Margulis' normal subgroup theorem \cite{Mar91}.

For the case that $E(M)=L^\wx(G/Q,\nu_Q)$ is non-trivial, we prove the following Theorem \ref{E(M) not C}, showing that $M$ is actually a crossed product $L^\wx(G/Q,\nu_Q)\rtimes \Lambda$ for some normal subgroup $\Lambda\lhd \Gamma$.
\begin{letterthm}\label{E(M) not C}
Assume that $$E(M)=L^\infty(G/Q,\nu_Q)=M\cap L^\wx(G/P,\nu_P)$$ for a proper parabolic subgroup $P\leq Q\lneq G$. Then we must have
$$
  M=L^\infty(G/Q,\nu_Q)\rt\Lambda
$$
for some normal subgroup $\Lambda\lhd\Gamma$.
\end{letterthm}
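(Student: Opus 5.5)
Write $A=L^\infty(G/P,\nu_P)$ and $B=L^\infty(G/Q,\nu_Q)\subset M$, and expand elements of $A\rtimes\Gamma$ as Fourier series $x=\sum_g x_g u_g$ with $x_g=E(xu_g^*)\in A$. The plan is to show that $M$ is the $B$-span of the unitaries $u_g$ it contains. Concretely, I will prove that the set
$$\Lambda=\{g\in\Gamma: x_g\neq 0 \text{ for some } x\in M\}$$
is a normal subgroup, that $u_g\in M$ for every $g\in\Lambda$, and that every Fourier coefficient of an element of $M$ lies in $B$. Together these give $M=B\rtimes\Lambda$: the inclusion $\supseteq$ is clear once $u_g\in M$ for $g\in\Lambda$, and $\subseteq$ follows from the Fourier expansion.

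\textbf{Step 1: the coefficients lie in $B$.} For $x\in M$ and $g\in\Gamma$ I want $x_g\in B$. The starting point is the identity $E(xu_g^*)=x_g$. If I knew $u_g\in M$, Lemma~\ref{Lemma AH} would immediately give $x_g\in E(M)=B$, but I do not yet know this. Instead I use that $M$ is a $B$-bimodule. For $b\in B$, the element $bxb'$ lies in $M$, and its $g$-coefficient is $b\,x_g\,\sigma_g(b')$. Taking $b'=1$, the map $x\mapsto x_g$ is therefore a $B$-bimodule map in a twisted sense.

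To exploit this, I consider $x^*x$ and $xx^*$, which lie in $M$. Lemma~\ref{Lemma AH} gives $E(x^*x)=\sum_h\sigma_{h^{-1}}(x_h^*x_h)\in B$. More generally, for $a\in A$ with $a\in M$, so $a\in B$, the condition $E(x^*bx)\in B$ holds for all $b\in B$, which gives $\sum_h \sigma_{h^{-1}}(x_h^* b x_h)\in B$ for every $b\in B$. This suggests reformulating the problem equivariantly: the map $b\mapsto E(x^*bx)$ is a normal ucp-type map $B\to B$, and I would analyse it through $\Gamma$-invariance of $M$. Conjugating by $u_k$ replaces $x$ by $u_kxu_k^*$, which permutes the coefficients equivariantly.

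The plan is to build, from $M$, a $\Gamma$-equivariant normal ucp map into $A$ and then apply the noncommutative Nevo--Zimmer theorem \cite{BH19} or, more directly, Margulis' factor theorem in its relative form. The ergodic $\Gamma$-von Neumann algebra I have in mind is the span of the $M$-coefficient data, and the goal is to conclude that the relevant projections $1_{\{x_g\neq 0\}}$ and the phases of $x_g$ are $Q$-measurable.

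\textbf{Step 2: from $B$-valued coefficients to the unitaries.} Suppose $x\in M$ with $0\neq x_g\in B$. I would then show $u_g\in M$ by using the fact that $B$ is diffuse and $\Gamma\curvearrowright G/Q$ is essentially free. Multiplying $x$ on the left and right by elements $b,b'\in B$ and averaging, in the style of Popa's averaging or the relative commutant trick of \cite{AH24}, isolates the single coefficient: for $h\neq g$, the maps $b\mapsto b\,\sigma_h(b^{-1})$ can be averaged out, since $\sigma_h\sigma_g^{-1}$ acts freely on $G/Q$. This yields $x_gu_g\in M$. Because $x_g\in B\subset M$, taking the polar decomposition and spectral projections then gives $pu_g\in M$ with $p=\mathrm{supp}(x_g)$. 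Translating by $\Gamma$ and using ergodicity of $\Gamma$ on $G/Q$, the union of such supports is all of $G/Q$, so $u_g\in M$.

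With $u_g\in M$ in hand, the closure properties of $\Lambda$ follow: $\Lambda$ is closed under products and inverses because the relevant unitaries lie in $M$, and it is normal because $M$ is $\Gamma$-invariant. This completes the argument once Step 1 is secured.

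\textbf{Main obstacle.} I expect the main difficulty to be Step 1. The Fourier coefficients of an element of $M$ need not a priori be $Q$-measurable, and Lemma~\ref{Lemma AH} only controls the $e$-coefficient. My first attempt would be to apply Lemma~\ref{Lemma AH} to the $\Gamma$-invariant algebra generated by $M$ and $u_g$, but that algebra may be strictly larger than $M$. Failing that, I would apply Nevo--Zimmer to the $\Gamma$-equivariant ucp map $M\to A$, $y\mapsto E(y)$, restricted to the subalgebra $W^*(B,\, xAx^*)$.
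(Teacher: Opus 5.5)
\textbf{Step 1 is a genuine gap, and it comes from the order of your argument.} You give no proof that the Fourier coefficients of elements of $M$ lie in $B$, and the routes you suggest do not work as stated. The algebra $W^*(B,xAx^*)$ is not contained in $M$, nor is it $\Gamma$-invariant. The equivariant ucp map $E|_M$ already has range $E(M)=B$, so the noncommutative Nevo--Zimmer theorem applied to it says nothing about off-diagonal coefficients.

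The missing observation is that the averaging in your Step 2 never uses $x_g\in B$. It uses projections $p_i\in B$ partitioning $G/Q$ with $p_i\sigma_r(p_i)=0$ for $r$ in a finite set, obtained from essential freeness and \cite[Lemma 3.1]{Suz18}. So it gives $E(xu_s^*)u_s\in M$ for \emph{every} $x\in M$. Then $W_s=\{b\in A: bu_s\in M\}$ is an ultraweakly closed $B$-bimodule with $W_sW_s^*+W_s^*W_s\subset M\cap A=B$. This forces $W_s=Bp_sv_s$, with $p_s\in\Proj(B)$ and $v_s\in A$ satisfying $v_s^*v_s=p_s$. Hence the moduli of the coefficients are automatically $Q$-measurable. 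What remains are the support $p_s$ (next paragraph) and the phase $v_s$, and your proposal handles neither.

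For the phase with $s\in\Lambda$ (where $p_s=1$), the paper uses $D_s(x_1,x_2)=v_s(x_1)\overline{v_s(x_2)}$ on $Z=G/P\times_{G/Q}G/P$. The relations $D_{st}=D_s\sigma_s(D_t)$ and $D_{gsg^{-1}}=\sigma_g(D_s)$ make each $D_r$ invariant under $\Lambda$. Since $\Lambda$ has finite index by Margulis' normal subgroup theorem, its Poisson boundary is again $G/P$. Double ergodicity, together with the Bruhat-type $G$-factor map $G/P\times G/P\to Z$, then makes $\Lambda\curvearrowright Z$ ergodic. So $D_r$ equals a constant $c_r$, and the identity $D_r(x_1,x_2)D_r(x_2,x_3)=D_r(x_1,x_3)$ gives $c_r=1$, i.e.\ $v_r\in B$.

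\textbf{The end of your Step 2 also fails.} Conjugating $pu_g\in M$ by $u_k$ gives $\sigma_k(p)u_{kgk^{-1}}$, which is a coefficient at a \emph{different} group element. So the largest projection $p_g\in B$ with $p_gu_g\in M$ is invariant only under the centralizer $C_\Gamma(g)$, and that centralizer need not act ergodically on $G/Q$. Ergodicity of $\Gamma\curvearrowright G/Q$ therefore does not yield $p_g=1$.

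What you actually have are the relations $p_{kgk^{-1}}=\sigma_k(p_g)$, $p_{g^{-1}}=\sigma_{g^{-1}}(p_g)$ and $p_g\sigma_g(p_h)\le p_{gh}$. Extracting $p_g\in\{0,1\}$ from them is a rigidity statement in its own right. The paper obtains it by applying \cite[Theorem 1.2]{AH24} to the $\Gamma$-invariant intermediate algebra $B\subset\overline{\oplus_{s\in\Gamma}Bp_su_s}^{\mathrm{w}}\subset B\rtimes\Gamma$. This produces the normal subgroup $\Lambda$ with $p_s=1$ for $s\in\Lambda$ and $p_s=0$ otherwise.
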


For the proof of Theorem \ref{E(M) not C}, we first apply the essential freeness of the action $\Gamma\curvearrowright (G/Q,\nu_Q)$ to show that for any $x\in M$ and $s\in\Gamma$, we also have $E(xu_s^*)u_s\in M$. Therefore, we have $M=\overline{\oplus_{s\in\Lambda} W_s u_s}^{\mathrm{w}}$, where
$$W_s:=\{b\in L^\infty(G/P,\nu_P)\mid bu_s\in M\}$$
is the space of $s$-Fourier coefficients. We then apply \cite[Theorem 1.2]{AH24} and the double ergodicity of the action $\Gamma\curvearrowright (G/P,\nu_P)$ to show that each $W_s$ is either $0$ or $L^\infty(G/Q,\nu_Q)$.
Moreover, $\Lambda:=\{s\in\Gamma\mid W_s=L^\infty(G/Q,\nu_Q)\}$ gives a normal subgroup of $\Gamma$. Therefore, $$M=\overline{\oplus_{s\in\Lambda} W_s u_s}^{\mathrm{w}}=L^\infty(G/Q,\nu_Q)\rt\Lambda.$$

Summarizing the results above, we reduce the Amrutam--Hartman conjecture to the following extreme case:

\begin{conjecture}
Let $M\subset L^\infty(G/P,\nu_P)\rtimes \Gamma$ be a $\Gamma$-invariant von Neumann subalgebra. Assume that
$$E(M)=M\cap L^\infty(G/P,\nu_P)=M\cap L(\Gamma)=\C 1.$$
Then we must have
$$M=\C 1.$$
\end{conjecture}

\subsection*{Acknowledgements.}

The author would like to thank Cyril Houdayer, Yongle Jiang, and Tattwamasi Amrutam for many valuable comments on this paper. The author is especially grateful to Narutaka Ozawa for pointing out a misuse of double metric ergodicity in an earlier version of this paper. The author also thanks Jianqiao Shang for helpful discussions on the use of AI.

\subsection*{AI use statement.}

ChatGPT 5.5/5.6 and Codex were used during the preparation of this paper. In particular, under the author's guidance, these tools contributed to the exploratory stage in which the main theorem and parts of its proof strategy were first identified, and were later used for English language editing. All mathematical statements, proofs, references, and final editorial decisions were independently checked and remain the sole responsibility of the author.

\section{The case of $E(M)=\C 1$}

\begin{proof}[Proof of Theorem \ref{E(M)=C nontrivial}]
Since $\Lambda\lhd \Gamma$ is non-trivial, Margulis' normal subgroup theorem \cite{Mar91} implies that $[\Gamma:\Lambda]<+\infty$.

Write $\sigma_g(a)=u_gau_g^*$ for $g\in\Gamma$. Since $E(M)=\C1$, there is a faithful normal $\Gamma$-invariant state $\varphi$ on $M$ such that
$$
  E(a)=\varphi(a)1
  \qquad(a\in M).
$$
Fix $0
\not=z\in M$, and let
$$
  H_z=\overline{\operatorname{span}}
  \left\{
    \widehat{u_h\sigma_g(z)}\mid
    h\in\Lambda,\ g\in\Gamma
  \right\}
  \subset L^2(M,\varphi).
$$
The space $H_z$ is separable and invariant under the unitary representations
$$
  U_g\widehat a=\widehat{\sigma_g(a)}
  \quad(g\in\Gamma),
  \qquad
  L_h\widehat a=\widehat{u_h a}
  \quad(h\in\Lambda).
$$
Here the invariance under $U_g$ uses the normality of $\Lambda$.

For $a\in M$ and $s\in\Gamma$, put
$$
  d_s^a=E(u_s^*a)\in L^\infty(G/P).
$$
The crossed-product Parseval identity gives
\begin{equation*}\label{eq:scalar-parseval}
  \sum_{s\in\Gamma}(d_s^a)^*d_s^b
  =E(a^*b)
  =\varphi(a^*b)1
  \qquad(a,b\in M).
\end{equation*}
Using a countable dense subspace of $H_z$, and then discarding a $\Gamma$-invariant null set of $G/P$, we may therefore define an isometry
\begin{equation*}\label{eq:scalar-Tx}
  T_x:H_z\to\ell^2(\Gamma),
  \qquad
  T_x\widehat a=(d_s^a(x))_{s\in\Gamma},
\end{equation*}
for almost every $x\in G/P$. The field $x\mapsto T_x$ is strongly measurable. For $g\in\Gamma$, let $C_g$ denote the unitary on $\ell^2(\Gamma)$ given by
$$
  C_g\delta_s=\delta_{gsg^{-1}},
$$
For $a\in M$ and $g,s\in\Gamma$,
$$  d_{gsg^{-1}}^{\,\sigma_g(a)}=E(u_{gsg^{-1}}^*u_gau_g^*)=E(u_gu_s^*au_g^*)=\sigma_g(E(u_s^*a))=\sigma_g(d_s^a).$$
Hence for almost every $x\in G/P$, we have
\begin{equation}\label{eq:scalar-Tx-covariance}
  T_{gx}=C_gT_xU_g^*.
\end{equation}

There is also a second intertwining relation. For $h\in\Lambda$,
\begin{equation}\label{Lambda eq}
  d_s^{u_h a}
  =E(u_s^*u_h a)
  =d_{h^{-1}s}^a.
\end{equation}
Let $\lambda$ and $\rho$ denote the left and right regular representation of $\Gamma$ on
$\ell^2(\Gamma)$, and let $L(\Lambda)=\lambda(\Lambda)''\subset B(\ell^2(\Gamma))$ and $R(\Lambda)=\rho(\Lambda)''\subset B(\ell^2(\Gamma))$. Then (\ref{Lambda eq}) gives
\begin{equation}\label{eq:scalar-left-intertwining}
  T_xL_h=\lambda_h T_x
  \qquad(h\in\Lambda).
\end{equation}

Put $P_x=T_xT_x^*\in B(\ell^2(\Gamma))$. Relation~\eqref{eq:scalar-left-intertwining} implies that
$$
  P_x\in\mathcal R:= L(\Lambda)'\cap B(\ell^2(\Gamma)).
$$
Let $n=[\Gamma:\Lambda]$, then, as a left $\Lambda$-representation,
$$
  \ell^2(\Gamma)\cong\ell^2(\Lambda)\otimes\C^n \mbox{ and } B(\ell^2(\Gamma))\cong B(\ell^2(\Lambda))\,\overline\otimes\,M_n(\C).
$$
and hence
$$
  \mathcal R\cong R(\Lambda)\,\overline\otimes\,M_n(\C).
$$
Therefore, $\mathcal R$ is a finite factor. Denote its normalized trace by $\tau_{\mathcal R}$. The projection space $\Proj(\mathcal R)$, equipped with
$$
  d_2(p,q)=\|p-q\|_{2,\tau_{\mathcal R}},
$$
is a separable metric space.

By \eqref{eq:scalar-Tx-covariance},
$$
  P_{gx}=C_gP_xC_g^*.
$$
Normality of $\Lambda$ implies that $\operatorname{Ad}C_g=\operatorname{Ad}(\lambda_g\cdot \rho_g)$ preserves $L(\Lambda)$ and $\mathcal R=L(\Lambda)'$. It preserves the unique normalized trace and hence acts isometrically for $d_2$. The map $x\in G/P\mapsto P_x\in \Proj(\mathcal R)$ is measurable for the $d_2$-Borel structure: on bounded subsets of the finite von Neumann algebra $\mathcal R$, weak-operator measurability implies $L^2$-measurability. By \cite{Ka03}, metric ergodicity of the Poisson boundary $(G/P,\nu_P)$ now gives a projection $P\in\mathcal R$ such that
\begin{equation}\label{eq:scalar-P-constant}
  P_x=P
  \qquad\text{for almost every }x\in G/P.
\end{equation}
After taking a common conull set in \eqref{eq:scalar-Tx-covariance}, we also have
\begin{equation}\label{eq:scalar-P-invariant}
  C_gPC_g^*=P
  \qquad(g\in\Gamma).
\end{equation}

Let \(K=P\ell^2(\Gamma)\).  By \eqref{eq:scalar-P-constant}, for almost every
\(x\), the isometry \(T_x\) is a unitary from \(H_z\) onto \(K\).  Fix one such
unitary
\[
  T_0:H_z\to K
\]
satisfying \eqref{eq:scalar-left-intertwining}.  Put
\[
  \mathcal R_P=P\mathcal RP .
\]
Then \(\mathcal R_P\) is also a finite factor.  Denote its normalized trace by
\(\tau_P\).

Consider the space
\[
  \mathcal I
  =
  \{T:H_z\to K\text{ unitary}\mid TL_h=\lambda_hT\text{ for all }h\in\Lambda\}.
\]
Via
\[
  T\longmapsto TT_0^*,
\]
we can identify \(\mathcal I\) with \(\mathcal U(\mathcal R_P)\).  We equip
\(\mathcal I\) with the corresponding metric
\[
  d(T,T')
  =
  \|TT_0^*-T'T_0^*\|_{2,\tau_P}.
\]
This makes $\mathcal I$ a separable metric space.

The covariance relation \eqref{eq:scalar-Tx-covariance} defines an action of
\(\Gamma\) on \(\mathcal I\) by
\[
  g\cdot T=C_gTU_g^* .
\]
Indeed, for \(h\in\Lambda\),
\[
  (C_gTU_g^*)L_h
  =
  C_gT L_{g^{-1}hg}U_g^*
  =
  C_g\lambda_{g^{-1}hg}TU_g^*
  =
  \lambda_h C_gTU_g^*,
\]
where normality of \(\Lambda\) is used.  Under the above identification
\(\mathcal I\simeq\mathcal U(\mathcal R_P)\), this action is given by
\[
  TT_0^*
  \longmapsto
  C_gTT_0^*V_g^*,
  \qquad
  V_g=T_0U_gT_0^* .
\]
Left multiplication by \(C_g\) and right multiplication by \(V_g^*\) preserve
the \(L^2\)-metric on the finite factor \(\mathcal R_P\), so the action is
isometric.

The map \(x\mapsto T_x\) is measurable as a map into \(\mathcal I\), and
\eqref{eq:scalar-Tx-covariance} says that it is \(\Gamma\)-equivariant.
Metric ergodicity of \((G/P,\nu_P)\) therefore gives a fixed
\(T\in\mathcal I\) such that
\[
  T_x=T
  \qquad\text{for almost every }x\in G/P.
\]

Since \(\widehat z\in H_z\), it follows that, for every \(s\in\Gamma\),
\[
  d_s^z(x)=(T_x\widehat z)(s)=(T\widehat z)(s)
\]
is almost everywhere constant.  Hence \(d_s^z\in\C1\) for every \(s\in\Gamma\).
Equivalently, all Fourier coefficients of \(z\) are scalar, and therefore
\(z\in L(\Gamma)\).  Since \(z\in M\) was arbitrary, we obtain
\[
  M\subset L(\Gamma).
\]
Together with \(M\cap L(\Gamma)=L(\Lambda)\), this proves \(M=L(\Lambda)\).
\end{proof}

\begin{remark}
Independently, Tattwamasi Amrutam and Yongle Jiang have obtained a related result by different methods. For pmp mixing actions $\Gamma\curvearrowright (X,\nu)$ of a class of ICC groups including torsion-free hyperbolic groups, they prove that if $M\subset L^\infty(X,\nu)\rtimes\Gamma$ is a $\Gamma$-invariant von Neumann subalgebra and $E(M)=\C 1$, then $M\subset L(\Gamma)$.
\end{remark}

\section{The case of $E(M)=L^\wx(G/Q)$}
Throughout this section, we let $G$, $\Gamma$ and $P$ be as in Section \ref{introduction}, and let $M\subset L^\infty(G/P)\rtimes \Gamma$ be a $\Gamma$-invariant von Neumann subalgebra with $$E(M)=L^\infty(G/Q)=M\cap L^\wx(G/P)$$ for a proper parabolic subgroup $P\leq Q\lneq G$ as in Theorem~\ref{E(M) not C}.

Let $\pi=\pi_Q:G/P\to G/Q$ be the factor map. We identify $L^\infty(G/Q)$ with $\pi^*L^\infty(G/Q)\subset L^\infty(G/P)$. We still denote the action of $g\in\Gamma$ on $L^\infty(G/P)\rtimes\Gamma$ by $\sigma_g(a)=u_ga u_g^*$ for $a\in L^\infty(G/P)\rtimes\Gamma$.

\begin{lemma}\label{lem:slice-relations}
For every $x\in M$ and $s\in\Gamma$,
$$
  \EP(xu_s^*)u_s\in M.
$$
Define
$$
  W_s=\{b\in L^\infty(G/P)\mid bu_s\in M\}.
$$
Then, for all $g,s,t\in\Gamma$,
\begin{itemize}
  \item [(1)] $W_e=L^\infty(G/Q)$,
  \item [(2)] $L^\infty(G/Q) W_s L^\infty(G/Q)\subset W_s$,
  \item [(3)] $W_{s^{-1}}=\sigma_{s^{-1}}(W_s^*)$,
  \item [(4)] $\sigma_g(W_s)=W_{gsg^{-1}}$,
  \item [(5)] $W_s\sigma_s(W_t)\subset W_{st}$,
  \item [(6)] $W_sW_s^*+W_s^*W_s\subset L^\infty(G/Q)$.
\end{itemize}

\end{lemma}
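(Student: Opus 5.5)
The plan is to obtain the slice statement $\EP(xu_s^*)u_s\in M$ from a partition-of-unity averaging argument that uses only the copy $L^\infty(G/Q)\subset M$ and essential freeness of $\Gamma\curvearrowright(G/Q,\nu_Q)$. The properties (1)--(6) then follow by direct algebra in the crossed product.

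\textbf{The slice statement.} Write $x=\sum_t x_tu_t$ with $x_t=\EP(xu_t^*)\in L^\infty(G/P)$, understood as Fourier series converging in $\|\cdot\|_2$. For a finite measurable partition $\mathcal A=\{A_1,\dots,A_k\}$ of $G/Q$, viewed as projections in $L^\infty(G/Q)\subset M$, set
$$\Phi_{\mathcal A}(x)=\sum_i 1_{A_i}\,x\,1_{s^{-1}A_i}\in M .$$
Using $x_tu_t1_B=x_t1_{tB}u_t$, one computes
$$\Phi_{\mathcal A}(x)=\sum_t x_t\Big(\sum_i 1_{A_i\cap ts^{-1}A_i}\Big)u_t .$$
For $t=s$ the bracket equals $1$. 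For $t\neq s$, the function $f^{\mathcal A}_{ts^{-1}}:=\sum_i 1_{A_i\cap ts^{-1}A_i}$ is the indicator of the set of points $y$ such that $y$ and $st^{-1}y$ lie in the same atom of $\mathcal A$.

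Fix an increasing sequence of finite partitions $\mathcal A_n$ generating the Borel $\sigma$-algebra of $G/Q$ (for instance, from a countable separating family). Then $f^{\mathcal A_n}_{g}$ decreases to the indicator of $\{y: gy=y\}$ for every $g$. This set is null for $g\ne e$ by essential freeness. Hence for each fixed $t\neq s$ we get $\|x_tf^{\mathcal A_n}_{ts^{-1}}\|_2\to0$ by dominated convergence.

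Since each $f^{\mathcal A_n}_{ts^{-1}}$ takes values in $[0,1]$, we also have $\|\Phi_{\mathcal A_n}(x)-x_su_s\|_2^2=\sum_{t\neq s}\|x_tf^{\mathcal A_n}_{ts^{-1}}\|_2^2$. This tends to $0$ by dominated convergence in $t$, with the domination coming from $\sum_t\|x_t\|_2^2<\infty$.

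To conclude, note that $\Phi_{\mathcal A}$ is a contraction in operator norm: it is a compression $V^*(1\otimes x)W$ by two isometries built from the partition. So $\Phi_{\mathcal A_n}(x)$ is a bounded sequence in $M$ converging in $\|\cdot\|_2$ to $x_su_s$. Therefore it converges strongly, and weak closedness of $M$ gives $x_su_s=\EP(xu_s^*)u_s\in M$.

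The main obstacle I expect is exactly this approximation step: choosing the partitions so that the pointwise limit is the fixed-point set, and justifying the passage from $\|\cdot\|_2$-convergence of a uniformly bounded net to membership in $M$.

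\textbf{Properties (1)--(6).} Each $W_s$ is a linear subspace because $M$ is.

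For (1), $W_e=M\cap L^\infty(G/P)=L^\infty(G/Q)$ by hypothesis (via Lemma~\ref{Lemma AH}).

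For (2), given $a,c\in L^\infty(G/Q)$ and $b\in W_s$, we have $a(bu_s)\sigma_{s^{-1}}(c)=abc\,u_s$. Here $\sigma_{s^{-1}}(c)\in L^\infty(G/Q)\subset M$ by $\Gamma$-invariance of $L^\infty(G/Q)$.

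For (3), use $(bu_s)^*=\sigma_{s^{-1}}(b^*)u_{s^{-1}}$, together with the same identity applied to $s^{-1}$ for the reverse inclusion.

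For (4), use $u_g(bu_s)u_g^*=\sigma_g(b)u_{gsg^{-1}}$ and the $\Gamma$-invariance of $M$.

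For (5), use $(bu_s)(cu_t)=b\sigma_s(c)u_{st}$.

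For (6), take $b,c\in W_s$. Then $(bu_s)(cu_s)^*=bc^*\in M\cap L^\infty(G/P)=L^\infty(G/Q)$. Likewise $(bu_s)^*(cu_s)=\sigma_{s^{-1}}(b^*c)$ lies in $L^\infty(G/Q)$, and applying $\sigma_s$ gives $b^*c\in L^\infty(G/Q)$. Taking spans yields (6).
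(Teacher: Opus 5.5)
Your proof is correct. For the slice statement $E(xu_s^*)u_s\in M$ it takes a genuinely different, somewhat more elementary route than the paper, although both use the same compression $x\mapsto\sum_i p_i\,x\,\sigma_{s^{-1}}(p_i)$ by a partition of unity in $L^\infty(G/Q)\subset M$.

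The paper controls the off-diagonal Fourier modes exactly. For each finite $K\subset\Gamma\setminus\{s\}$ it uses \cite[Lemma 3.1]{Suz18} to get a partition with $\Omega_i\cap ts^{-1}\Omega_i$ null for all $t\in K$. Then $T^{(s)}_K$ annihilates the modes $t\in K$ and fixes the mode $s$. It passes to a weak-operator ultralimit $T^{(s)}$ along an exhaustion, proves $T^{(s)}$ is normal (it is conjugate to a ucp map preserving a faithful normal state), and identifies $T^{(s)}(a)=E(au_s^*)u_s$ by density.

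You instead fix one refining generating sequence of partitions. Each off-diagonal multiplier $f^{\mathcal A_n}_{ts^{-1}}$ then only decreases to the indicator of the null fixed-point set. You compensate with Parseval, dominated convergence in $t$, and the equivalence of $\|\cdot\|_2$-convergence and strong convergence on bounded sets. This avoids Suzuki's lemma, the ultrafilter and the normality argument, and uses essential freeness only in its raw form. It also produces an honest sequence $\Phi_{\mathcal A_n}(a)\to E(au_s^*)u_s$ for every $a$. The paper's version, in exchange, annihilates any prescribed finite set of modes exactly at each stage.

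One bookkeeping remark: $\nu_P$ is only quasi-invariant. With $\varphi=\nu_P\circ E$, the unweighted Parseval identity $\varphi(yy^*)=\sum_t\|E(yu_t^*)\|^2_{L^2(\nu_P)}$ holds, while $\varphi(y^*y)$ carries the weights $t_*\nu_P$. Using the former, you get strong convergence of the adjoints. That still gives weak convergence and hence membership in $M$. With the latter, the same dominated-convergence argument goes through by quasi-invariance.

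Your verification of (1)--(6) is essentially the paper's.
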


\begin{proof}
Fix $s\in\Gamma$. Let $K\subset\Gamma\setminus \{s\}$ be finite and put
$$
  F_K=\{ts^{-1}\mid t\in K\}\subset\Gamma\setminus\{e\}.
$$
By \cite[Lemma 6.2]{BH19} (see also \cite[Remark 13]{Oz16}), the action $\Gamma\curvearrowright G/Q$ is essentially free. Then \cite[Lemma 3.1]{Suz18} gives a finite measurable partition $G/Q=\bigsqcup_i\Omega_i$ such that
$$
  \Omega_i\cap r\Omega_i
$$
is null for every $r\in F_K$ and every $i$. Let $p_i=\mathds{1}_{\Omega_i}\in L^\infty(G/Q)$. Then $\sum_i p_i=1$ and $p_i\sigma_r(p_i)=0$ for all $r\in F_K$.

For $a\in L^\infty(G/P)\rt \Gamma$, define $T_K^{(s)}: L^\infty(G/P)\rtimes\Gamma\to L^\infty(G/P)\rtimes\Gamma$ by
$$
  T_K^{(s)}(a)
  =
  \sum_i p_i a\sigma_{s^{-1}}(p_i)
  =
  \Bigl(\sum_i p_i(au_s^*)p_i\Bigr)u_s .
$$
The map $R_{u_s^*}:a\mapsto au_s^*$, the ucp map $E_K:y\mapsto\sum_i p_iyp_i$, and $R_{u_s}:a\mapsto au_s$ are all normal contractions, hence so is $T_K^{(s)}=R_{u_s}\circ E_K\circ R_{u_s^*}$. Let now $x\in M$. Since $p_i$ and $\sigma_{s^{-1}}(p_i)$ belong to $L^\infty(G/Q)\subset M$, we have
$$
  T_K^{(s)}(x)=\sum_i p_ix\sigma_{s^{-1}}(p_i)\in M.
$$

For $b\in L^\infty(G/P)$ and $t\in\Gamma$, we have
\begin{equation}\label{T_K(bus)}
\begin{aligned}
T_K^{(s)}(bu_t)
 &= \sum_i p_i b u_t \sigma_{s^{-1}}(p_i)= \sum_i p_i b \sigma_{ts^{-1}}(p_i)u_t =
 \begin{cases}
 bu_s, & \text{if } t=s,\\
 0, & \text{if }t\in K.
 \end{cases}
\end{aligned}
\end{equation} 

Let $K_n\subset \Gamma\setminus\{s\}$ be an increasing sequence of finite subsets with $\bigcup_n K_n=\Gamma\setminus\{s\}$. Take $\omega\in\beta\N\setminus\N$ and define $T^{(s)}:L^\infty(G/P)\rtimes\Gamma\to L^\infty(G/P)\rtimes\Gamma$ by
$$
  T^{(s)}(a)=\text{wo-}\lim_{n\to\omega}T_{K_n}^{(s)}(a)
  \qquad(a\in L^\infty(G/P)\rtimes\Gamma).
$$
Let $\varphi$ be the normal faithful state associated with $\nu_P$ on $L^\infty(G/P,\nu_P)\rtimes\Gamma$. Then
$$R_{u_s^*}\circ T^{(s)}\circ R_{u_s}=\lim_{\omega} R_{u_s^*}\circ T_{K_n}^{(s)}\circ R_{u_s}=\lim_{\omega} E_{K_n},$$
which is a ucp map preserving the normal faithful state $\varphi$, as each map $E_K:y\mapsto\sum_i p_iyp_i$ is. Hence $R_{u_s^*}\circ T^{(s)}\circ R_{u_s}$ is normal and so is $T^{(s)}$.

For any $a\in L^\infty(G/P)\rtimes\Gamma$, by (\ref{T_K(bus)}) and the normality of $T^{(s)}$, we have
$$
  T^{(s)}(a)=\EP(au_s^*)u_s.
$$

Let now $x\in M$. Since $T^{(s)}_{K}(x)\in M$ for any finite $K\subset\Gamma\setminus\{s\}$, we have
$$
  \EP(xu_s^*)u_s=T^{(s)}(x)=\text{wo-}\lim_{n\to\omega}T_{K_n}^{(s)}(x)\in M.
$$

It remains to check the listed algebraic relations (1-6). First
$$
  W_e=M\cap L^\infty(G/P)=L^\infty(G/Q).
$$
Let $a,c\in L^\infty(G/Q)$ and $b\in W_s$, then
$$
  a(bu_s)c=ab\sigma_s(c)u_s\in M,
$$
so
$$
  L^\infty(G/Q) W_s L^\infty(G/Q)\subset W_s.
$$
Since $bu_s\in M$ if and only if $(bu_s)^*=\sigma_{s^{-1}}(\bar{b}) u_{s^{-1}}\in M$, we have
$$W_{s^{-1}}=\sigma_{s^{-1}}(W_s^*).$$
Let $b\in W_s$, then
$$
  u_g(bu_s)u_g^*=\sigma_g(b)u_{gsg^{-1}}\in M,
$$
which gives $\sigma_g(W_s)=W_{gsg^{-1}}$. Let $b\in W_s$ and $c\in W_t$, then
$$
  (bu_s)(cu_t)=b\sigma_s(c)u_{st}\in M,
$$
so $b\sigma_s(c)\in W_{st}$. Finally, for $b,c\in W_s$,
$$
  (bu_s)(cu_s)^*=b\overline c\in
  M\cap L^\infty(G/P)=L^\infty(G/Q)
$$
and
$$
  (bu_s)^*(cu_s)=\sigma_{s^{-1}}(\overline b c)\in
  M\cap L^\infty(G/P)=L^\infty(G/Q).
$$
Applying $\sigma_s$ to the last inclusion gives $\overline b c\in L^\infty(G/Q)$, and the lemma follows.
\end{proof}

\begin{proposition}\label{prop:rank-one-slices}
For every $s\in\Gamma$ there exists a projection $p_s\in\Proj(L^\infty(G/Q))$ and a partial isometry $v_s\in L^\infty(G/P)$ such that
$$
  v_s^*v_s=v_sv_s^*=p_s,\qquad W_s=L^\infty(G/Q) p_s v_s.
$$
They may be chosen with $p_e=1$ and $v_e=1$. Equivalently, for every $b\in W_s$ there is a unique $a_b\in L^\infty(G/Q) p_s$ such that $b=a_bv_s$.
\end{proposition}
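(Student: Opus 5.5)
The plan is to prove this directly from Lemma~\ref{lem:slice-relations}, using only that $W_s$ is a weak$^*$-closed $L^\infty(G/Q)$-module with $b\overline{c}\in L^\infty(G/Q)$ for all $b,c\in W_s$. No ergodicity input should be needed at this stage.

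\emph{Weak$^*$-closedness.} The map $b\mapsto bu_s$ is normal from $L^\infty(G/P)$ into $L^\infty(G/P)\rtimes\Gamma$, and $M$ is weakly closed, so $W_s$ is weak$^*$-closed. By Lemma~\ref{lem:slice-relations}(2) it is a module over the abelian algebra $L^\infty(G/Q)$.

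\emph{Supports.} For $b\in W_s$, item (6) gives $|b|^2=b\overline b\in L^\infty(G/Q)$. Hence $|b|\in L^\infty(G/Q)$, and the support projection $e_b=\mathds 1_{\{b\neq0\}}=\mathds 1_{\{|b|^2\neq 0\}}$ lies in $L^\infty(G/Q)$. Let $p_s=\bigvee_{b\in W_s}e_b$. Since $L^\infty(G/Q)$ is a von Neumann algebra, $p_s\in\Proj(L^\infty(G/Q))$.

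\emph{Partial isometries.} For each $b\in W_s$, set
$$
w_k=b\,|b|^{-1}\mathds 1_{\{|b|>1/k\}}, \qquad k\in\N.
$$
Since $|b|^{-1}\mathds 1_{\{|b|>1/k\}}\in L^\infty(G/Q)$, each $w_k$ lies in $W_s$ by the module property. The $w_k$ are uniformly bounded by $1$ and converge weak$^*$ (indeed a.e.) to the phase $u_b=b|b|^{-1}e_b$. By weak$^*$-closedness, $u_b\in W_s$, and $|u_b|=e_b$.

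\emph{Exhaustion.} A standard exhaustion argument (Zorn's lemma, or a countable one using separability of $L^2$) gives $b_n\in W_s$ whose supports $e_n:=e_{b_n}$ are pairwise orthogonal with $\sum_n e_n=p_s$. To make supports disjoint, multiply a new $b$ by $1-\sum_{\text{previous}}e_n\in L^\infty(G/Q)$; this stays in $W_s$. Put
$$
v_s=\sum_n u_{b_n}.
$$
The terms have disjoint supports and are bounded by $1$, so the sum converges weak$^*$ and $v_s\in W_s$, with $v_s^*v_s=v_sv_s^*=\overline{v_s}v_s=p_s$. Then $L^\infty(G/Q)p_sv_s\subset W_s$ by (2).

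\emph{Reverse inclusion.} For $b\in W_s$, set $a_b:=b\overline{v_s}$. By (6) applied to $b,v_s\in W_s$, we have $a_b\in L^\infty(G/Q)$, and clearly $a_b=a_bp_s$. Since $e_b\le p_s$,
$$
a_bv_s=b\,\overline{v_s}v_s=bp_s=b .
$$
Hence $W_s=L^\infty(G/Q)p_sv_s$.

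\emph{Uniqueness and the case $s=e$.} If $a\in L^\infty(G/Q)p_s$ and $av_s=0$, then $a=ap_s=av_s\overline{v_s}=0$, so $a_b$ is unique. For $s=e$, item (1) gives $W_e=L^\infty(G/Q)\ni 1$, so $p_e=1$, and we may take $v_e=1$ directly.

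The only point requiring care is ensuring that the phase $u_b$, and hence $v_s$, actually lies in $W_s$ rather than merely in $L^\infty(G/P)$. This is handled by the truncation argument together with weak$^*$-closedness of $W_s$.
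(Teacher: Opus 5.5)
Your proof is correct and follows essentially the same route as the paper. Supports lie in $L^\infty(G/Q)$ by (6), phases are brought into $W_s$ by truncation and weak$^*$-closedness, and $v_s$ is a sum of phases over disjoint supports exhausting $p_s$. The differences are minor streamlinings: you define $p_s=\bigvee_{b\in W_s}e_b$ and exhaust it with a maximal orthogonal family instead of disjointifying a weak$^*$-dense sequence, and for the reverse inclusion you get $b\overline{v_s}\in L^\infty(G/Q)$ directly from (6) using $v_s\in W_s$, which avoids the paper's second truncation argument.
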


\begin{proof}
Fix $s\in\Gamma$. Then $W_s$ is ultraweakly closed in $L^\infty(G/P)$. It is also a bimodule over $L^\infty(G/Q)$ and satisfies
$$
  b\overline c,\ \overline b c\in L^\infty(G/Q)\qquad(b,c\in W_s).
$$
Since $(G/P,\nu_P)$ is a standard probability space, $L^1(G/P,\nu_P)$ is separable. Hence the unit ball of $W_s$ is compact metrizable for the ultraweak topology. Choose a sequence $(b_n)_{n\ge1}$ in the unit ball of $W_s$ which is ultraweakly dense. Put
$$
  r_n=\mathds{1}_{\supp(|b_n|)}\in\Proj(L^\infty(G/Q)),
$$
and define pairwise orthogonal projections
$$
  q_1=r_1,\qquad
  q_n=r_n\Bigl(1-\bigvee_{m<n}r_m\Bigr)\quad(n\ge2).
$$
Let
$$
  p_s=\bigvee_{n\ge1}q_n=\bigvee_{n\ge1}r_n\in\Proj(L^\infty(G/Q))
$$
and
$$
  v_s=\sum_{n\ge1}q_n\,b_n\,|b_n|^{-1},
$$
where $|b_n|^{-1}$ is taken only on $\supp(|b_n|)$ and the summands are zero off $q_n$. The summands are disjoint, so $v_s\in L^\infty(G/P)$ and
$$
  |v_s|=p_s,\qquad v_s^*v_s=v_sv_s^*=p_s.
$$

We first show that $v_s\in W_s$. For $k\ge1$, set
$$
  q_{n,k}=q_n\,\mathds{1}_{\{|b_n|\ge 1/k\}}\in L^\infty(G/Q).
$$
Then
$$
  q_{n,k}b_n|b_n|^{-1}=(q_{n,k}|b_n|^{-1})b_n\in L^\infty(G/Q) W_s \subset W_s,
$$
and, as $k\to\infty$, these elements converge ultraweakly to $q_nb_n|b_n|^{-1}=q_nv_s$. Hence $q_nv_s\in W_s$. Finite sums $\sum_{n\le N}q_nv_s$ belong to $W_s$ and converge ultraweakly to $v_s$, so $v_s\in W_s$. Consequently $L^\infty(G/Q) p_s v_s\subset W_s$.

Conversely, let $b\in W_s$. Since $(1-p_s)b_n=0$ for every $n$, ultraweak density gives $(1-p_s)b=0$, so $b=bp_s$. On $q_n$,
$$
  q_n b\overline{v_s}=q_n\,b\,\overline{b_n}\,|b_n|^{-1}.
$$
For $k\ge1$, the truncated function
$$
  q_{n,k}\,b\,\overline{b_n}\,|b_n|^{-1}
$$
belongs to $L^\infty(G/Q)$, since $b\overline{b_n}\in L^\infty(G/Q)$ and $q_{n,k}|b_n|^{-1}\in L^\infty(G/Q)$ is bounded. These functions converge ultraweakly, bounded by $\|b\|_\infty q_n$, to $q_nb\overline{v_s}$. Thus $q_nb\overline{v_s}\in L^\infty(G/Q) q_n$, and since the $q_n$'s are disjoint,
$$
  a_b:=b\overline{v_s}=\sum_n q_nb\overline{v_s}
$$
belongs to $L^\infty(G/Q) p_s$. Finally
$$
  a_bv_s=b\overline{v_s}v_s=bp_s=b.
$$
This proves $W_s=L^\infty(G/Q) p_sv_s$. Uniqueness follows by multiplying $av_s=a'v_s$ by $\overline{v_s}$. For $s=e$, take $p_e=1$ and $v_e=1$.
\end{proof}

\begin{lemma}\label{lem:support-relations}
The projections from Proposition~\ref{prop:rank-one-slices} satisfy, for all $g,s,t\in\Gamma$,
$$
  p_e=1,\qquad
  p_{s^{-1}}=\sigma_{s^{-1}}(p_s),\qquad
  p_{gsg^{-1}}=\sigma_g(p_s),
$$
and
$$
  p_s\sigma_s(p_t)\leq p_{st}.
$$
\end{lemma}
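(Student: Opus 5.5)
The key observation is that $p_s$ is intrinsic to $W_s$: it is the support projection of the module, i.e. the smallest projection $p\in\Proj(L^\infty(G/Q))$ with $(1-p)b=0$ for all $b\in W_s$. Equivalently, $p_s=\bigvee_{b\in W_s}\mathds{1}_{\supp(|b|)}$. This follows from Proposition~\ref{prop:rank-one-slices}, since $W_s=L^\infty(G/Q)p_sv_s$ with $|v_s|=p_s$. In particular $p_s$ does not depend on the choice of $v_s$. Each relation will then follow by transporting the corresponding relation between the $W_s$ in Lemma~\ref{lem:slice-relations} to supports, using that $\sigma_g$ is a $*$-automorphism of $L^\infty(G/P)$ preserving $L^\infty(G/Q)$, and that supports satisfy $\supp(\sigma_g(b))=\sigma_g(\supp b)$ and $\supp(\bar b)=\supp(b)$.

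First, $p_e=1$ is the chosen normalization, and it is consistent because $W_e=L^\infty(G/Q)$ contains $1$. Next, by (3), $W_{s^{-1}}=\sigma_{s^{-1}}(W_s^*)$. The support of $\sigma_{s^{-1}}(\bar b)$ is $\sigma_{s^{-1}}(\mathds{1}_{\supp|b|})$. Taking the supremum over $b\in W_s$, and using that $\sigma_{s^{-1}}$ preserves arbitrary suprema of projections, gives $p_{s^{-1}}=\sigma_{s^{-1}}(p_s)$. The same argument applied to (4), $\sigma_g(W_s)=W_{gsg^{-1}}$, gives $p_{gsg^{-1}}=\sigma_g(p_s)$.

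For the inequality, I would use (5). Take $b=v_s\in W_s$ and $c=v_t\in W_t$. Then $v_s\sigma_s(v_t)\in W_{st}$, and its modulus is $|v_s|\,\sigma_s(|v_t|)=p_s\sigma_s(p_t)$, because $L^\infty(G/P)$ is commutative. Hence $\supp\bigl(v_s\sigma_s(v_t)\bigr)=p_s\sigma_s(p_t)$. On the other hand, every element of $W_{st}=L^\infty(G/Q)p_{st}v_{st}$ has support below $p_{st}$. This gives $p_s\sigma_s(p_t)\le p_{st}$.

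I do not expect a serious obstacle. The only point needing care is the first step, namely that $p_s$ really equals the supremum of supports of elements of $W_s$. One inclusion is that $b=a_bv_s$ forces $\supp b\le p_s$. The other is that $v_s\in W_s$ with $|v_s|=p_s$. Together these make all the identities independent of the particular choice of $v_s$.
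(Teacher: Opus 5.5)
Your proposal is correct and follows essentially the same route as the paper. Both arguments identify $p_s$ intrinsically as the least projection supporting $W_s$. Both then transport relations (3) and (4) of Lemma~\ref{lem:slice-relations} through $\sigma_g$ to get the inverse and conjugacy identities. Finally, both get $p_s\sigma_s(p_t)\le p_{st}$ from the fact that $v_s\sigma_s(v_t)\in W_{st}$ has modulus $p_s\sigma_s(p_t)$, while every element of $W_{st}$ is supported under $p_{st}$.
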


\begin{proof}
The projection $p_s$ is the least projection $p\in L^\infty(G/Q)$ such that $W_s=pW_s$. Hence it is determined uniquely by $W_s$. The inverse and conjugacy relations in Lemma~\ref{lem:slice-relations} immediately give
$$
  p_{s^{-1}}=\sigma_{s^{-1}}(p_s)
  \quad\text{and}\quad
  p_{gsg^{-1}}=\sigma_g(p_s).
$$
Also $p_e=1$ because $W_e=L^\infty(G/Q)$.

Since $p_sv_s\in W_s$ and $p_tv_t\in W_t$, the multiplication relation gives
$$
  p_sv_s\,\sigma_s(p_tv_t)\in W_{st}.
$$
The absolute value of the function on the left is $p_s\sigma_s(p_t)$. Every element of $W_{st}=L^\infty(G/Q)p_{st}v_{st}$ is supported by $p_{st}$, and therefore $p_s\sigma_s(p_t)\leq p_{st}$.
\end{proof}

\begin{proposition}\label{prop:support-rigidity}
There exists a normal subgroup $\Lambda\lhd\Gamma$ such that
$$
  p_s=
  \begin{cases}
    1,&s\in\Lambda,\\
    0,&s\notin\Lambda.
  \end{cases}
$$
\end{proposition}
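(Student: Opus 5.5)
The plan is to show that each $p_s$ is either $0$ or $1$, and then check that $\Lambda:=\{s\in\Gamma\mid p_s=1\}$ is a normal subgroup. Once the dichotomy holds, the group structure comes from Lemma~\ref{lem:support-relations}. We have $p_e=1$, so $e\in\Lambda$. If $p_s=1$ then $p_{s^{-1}}=\sigma_{s^{-1}}(1)=1$. If $p_s=p_t=1$ then $1=p_s\sigma_s(p_t)\le p_{st}$. Finally $p_{gsg^{-1}}=\sigma_g(p_s)$ gives normality. The substance is therefore the dichotomy $p_s\in\{0,1\}$.

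For the dichotomy, fix $s$ with $p_s\neq 0$ and view $p_s=\mathds 1_{A_s}$ with $A_s\subset G/Q$ of positive measure. The relations in Lemma~\ref{lem:support-relations} only involve the family $\{p_s\}$, which is conjugation-equivariant, so ergodicity alone will not suffice. Instead I would use the inequality $p_s\sigma_s(p_t)\le p_{st}$ together with the structure $W_s=L^\infty(G/Q)p_sv_s$.

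The key point is that $v_s$ is a unimodular function on $A_s\subset G/P$. It is $L^\infty(G/P)$-valued but not $L^\infty(G/Q)$-measurable unless the fibers are trivial. Meanwhile $v_s\,\sigma_s(\bar v_s)$ lies in $W_s\sigma_s(W_s^*)$, which by property (5) combined with (3) sits inside controlled spaces.

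I would try to exploit the element $x=p_sv_su_s\in M$ as follows. Consider the $\Gamma$-invariant von Neumann algebra generated by $L^\infty(G/Q)$ and the conjugates $\sigma_g(x)$. Applying \cite[Theorem 1.2]{AH24}, this algebra should be of the form $L^\infty(G/Q)\rtimes\Lambda'$ inside $L^\infty(G/Q)\rtimes\Gamma$, provided we first show $v_s$ is $Q$-measurable on $A_s$.

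To get $Q$-measurability, I would argue via double ergodicity. By (6), for every $g$ with $gsg^{-1}=s$ we have $\bar v_s\sigma_g(v_s)\in L^\infty(G/Q)$ on the overlap. More generally, the products $\sigma_g(v_s)\overline{\sigma_{g'}(v_s)}$ tie $v_s$ across translates. I would encode $v_s$ on $G/P$ modulo $L^\infty(G/Q)$-phases as a $\Gamma$-equivariant measurable map from $G/Q$ into a space of $\mathbb T$-valued functions on fibers modulo constants. I would then use double ergodicity of $\Gamma\curvearrowright G/P\times G/P$ to force this class to be trivial.

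Granting $v_s\in L^\infty(G/Q)p_s$, we get $p_s\in W_s$, so $p_su_s\in M\cap(L^\infty(G/Q)\rtimes\Gamma)$. The AH24 intermediate-subalgebra theorem, applied to $M\cap(L^\infty(G/Q)\rtimes\Gamma)$, gives that this intersection equals $L^\infty(G/Q)\rtimes\Lambda$. Hence $W_s\cap L^\infty(G/Q)$ is either $0$ or all of $L^\infty(G/Q)$, so $p_s\in\{0,1\}$.

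The main obstacle I expect is precisely showing that $v_s$ can be taken $L^\infty(G/Q)$-measurable, that is, ruling out a nontrivial fiberwise phase. This is where double ergodicity of $G/P$, rather than mere ergodicity or metric ergodicity, must enter.
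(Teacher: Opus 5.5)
There is a genuine gap. Your argument hinges on first showing that $v_s$ is $L^\infty(G/Q)$-measurable on $p_s$ for an arbitrary $s$, and this is not proved: the double-ergodicity sketch has no mechanism behind it. For a single $s$, the relation $\sigma_g(W_s)=W_{gsg^{-1}}$ moves $s$. It therefore gives self-relations for $v_s$ essentially only when $g$ centralizes $s$, an infinite-index subgroup for which you have no ergodicity statement on $G/P\times G/P$. The multiplicative relation coming from $W_s\sigma_s(W_t)\subset W_{st}$ only carries information on $p_s\sigma_s(p_t)$. So you never produce a function on $Z$ or on $G/P\times G/P$ that is invariant under a group acting ergodically.

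In the paper this phase-descent is done only afterwards, in Proposition~\ref{prop:phase-descent}, and it uses the dichotomy you are trying to prove. One needs $p_s=1$ for all $s$ in a normal subgroup $\Lambda$, so that $D_{st}=D_s\sigma_s(D_t)$ and $D_{gsg^{-1}}=\sigma_g(D_s)$ hold globally on $Z$; this is what gives $\Lambda$-invariance of $D_r$. One also needs $[\Gamma:\Lambda]<\infty$, from Margulis' normal subgroup theorem, to get ergodicity of $\Lambda\curvearrowright Z$ from double ergodicity of the Poisson boundary of $\Lambda$. Your ordering puts the hard step first, without the structure needed to carry it out.

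The missing idea is that the phases $v_s$ play no role in the dichotomy. Your remark that the relations on $\{p_s\}$ alone cannot suffice is mistaken: they do suffice once combined with \cite[Theorem 1.2]{AH24}, applied to an algebra that is \emph{not} a subalgebra of $M$.

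Let $\widetilde M$ be the weak closure of $\operatorname{span}\{bp_su_s\mid b\in L^\infty(G/Q),\ s\in\Gamma\}$. By Lemma~\ref{lem:support-relations}:
\begin{itemize}
\item $p_s\sigma_s(p_t)\le p_{st}$ gives closure under products;
\item $p_{s^{-1}}=\sigma_{s^{-1}}(p_s)$ gives closure under adjoints;
\item $p_{gsg^{-1}}=\sigma_g(p_s)$ gives $\Gamma$-invariance;
\item $p_e=1$ gives $L^\infty(G/Q)\subset\widetilde M\subset L^\infty(G/Q)\rtimes\Gamma$.
\end{itemize}
Then \cite[Theorem 1.2]{AH24} yields $\widetilde M=L^\infty(G/Q)\rtimes\Lambda$. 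The $s$-th Fourier coefficients of $\widetilde M$ lie in the weakly closed space $L^\infty(G/Q)p_s$ and include $p_s$. Comparing them with those of $L^\infty(G/Q)\rtimes\Lambda$ gives $p_s=1$ for $s\in\Lambda$ and $p_s=0$ otherwise.

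Your attempt to apply AH24 to the algebra generated by the conjugates of $p_sv_su_s$ was one step away. Replacing $p_sv_su_s$ by $p_su_s$ removes any need for measurability of $v_s$. Your check that $\{s\mid p_s=1\}$ is a normal subgroup is correct, but it becomes unnecessary, since $\Lambda$ comes directly from AH24.
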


\begin{proof}
Define
$$
  \widetilde M=\overline{\oplus_{s\in\Gamma}L^\infty(G/Q)p_su_s}^{\mathrm{w}}
  =\overline{\operatorname{span}}^{\,\mathrm{w}}
  \{bp_su_s\mid b\in L^\infty(G/Q),\ s\in\Gamma\}
  \subset L^\infty(G/Q)\rtimes\Gamma.
$$
For $b,c\in L^\infty(G/Q)$ and $s,t\in\Gamma$, Lemma~\ref{lem:support-relations} gives
$$
  (bp_su_s)(cp_tu_t)
  =b\sigma_s(c)p_s\sigma_s(p_t)u_{st}
  \in L^\infty(G/Q)p_{st}u_{st}
$$
and
$$
  (bp_su_s)^*
  =\sigma_{s^{-1}}(p_sb^*)u_{s^{-1}}
  \in L^\infty(G/Q)p_{s^{-1}}u_{s^{-1}}.
$$
Thus $\widetilde M$ is a von Neumann algebra, and $p_e=1$ gives $L^\infty(G/Q)\subset\widetilde M$. Moreover,
$$
  u_g(bp_su_s)u_g^*
  =\sigma_g(b)p_{gsg^{-1}}u_{gsg^{-1}},
$$
so $\widetilde M$ is $\Gamma$-invariant.

Hence \cite[Theorem 1.2]{AH24}, applied to
$$
  L^\infty(G/Q)\subset\widetilde M\subset L^\infty(G/Q)\rtimes\Gamma,
$$
gives a normal subgroup $\Lambda\lhd\Gamma$ such that
$$
  \widetilde M=L^\infty(G/Q)\rtimes\Lambda.
$$

Consequently, $L^\infty(G/Q)p_s=L^\infty(G/Q)$ for $s\in\Lambda$ and $L^\infty(G/Q)p_s=0$ for $s\notin\Lambda$, which proves the assertion.
\end{proof}

Let $\pi:G/P\to G/Q$ be the factor map. Let
$$
  Z=G/P\times_{G/Q}G/P:=\{(x_1,x_2)\in G/P\times G/P\mid \pi(x_1)=\pi(x_2)\in G/Q\}.
$$
Equip $Z$ with the standard relative product measure over $G/Q$. Namely, let
$$
  \nu_P=\int_{G/Q}\nu_y\,d\nu_Q(y)
$$
be the disintegration of $\nu_P$ over $\nu_Q$ for $\pi:G/P\to G/Q$, where $\nu_y$ is supported on $\pi^{-1}(y)\cong Q/P$. Then
$$
  \nu_Z=\nu_P\times_{\nu_Q}\nu_P
  :=
  \int_{G/Q}\nu_y\otimes\nu_y\,d\nu_Q(y)
$$
is the measure with respect to which all almost-everywhere statements on $Z$ are understood.

Then for $s\in \Lambda$, to prove $W_s= L^\infty(G/Q)$, it is enough to show that $v_s\in \mathcal{U}(L^\infty(G/Q))$, i.e., for almost every $(x_1,x_2)\in G/P\times_{G/Q} G/P=Z$, one has $v_s(x_1)=v_s(x_2)$.

For $(x_1,x_2)\in Z=G/P\times_{G/Q}G/P$, set
$$
  D_s(x_1,x_2)=v_s(x_1)\overline{v_s(x_2)}.
$$

\begin{lemma}\label{lem:relative-bruhat-factor}
There exists a measure-class preserving $G$-factor map
$$
  \theta:(G/P\times G/P,[\nu_P\times\nu_P])\longrightarrow (Z,[\nu_Z]).
$$
\end{lemma}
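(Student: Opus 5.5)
The plan is to build $\theta$ from the Bruhat decomposition. Both spaces have a single conull $G$-orbit, and on these orbits $\theta$ will be the natural projection between homogeneous spaces.

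\emph{Setup and the orbit in $G/P\times G/P$.} Fix a maximal $\R$-split torus $A\subset P$, write $P=MAN$ with $M=Z_K(A)$, and let $W$ be the restricted Weyl group with longest element $w_0$. By the Bruhat decomposition, the $G$-orbits on $G/P\times G/P$ are indexed by $W$. The orbit $\mathcal O=G\cdot(P,w_0P)$ is open and has conull complement for $\nu_P\times\nu_P$, since the complement is a finite union of lower-dimensional submanifolds. Its stabilizer is $P\cap w_0Pw_0^{-1}=MA$, so $\mathcal O\cong G/MA$.

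\emph{The orbit in $Z$.} Write $Q=L_QU_Q$ with Levi factor $L_Q\supset MA$ and unipotent radical $U_Q\subset N$. Let $w_L$ be the longest element of the Weyl group $W_L\subset W$ of $L_Q$. The fibre of $Z\to G/Q$ over $eQ$ is $Q/P\times Q/P\cong L_Q/P_L\times L_Q/P_L$, where $P_L=P\cap L_Q$. Applying the Bruhat decomposition inside $L_Q$, this fibre has a unique open $Q$-orbit through $(P,w_LP)$, with conull complement for $\nu_{eQ}\otimes\nu_{eQ}$. By $G$-equivariance of the disintegration $\nu_y$ (up to measure class), the $G$-orbit $\mathcal O_Z=G\cdot(P,w_LP)$ is conull in $Z$ for $\nu_Z$. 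Its stabilizer is
$$H=P\cap w_LPw_L^{-1}=MA\,U_Q,$$
since $w_L\in N_{L_Q}(A)$ normalizes $U_Q$ and $P\cap w_LPw_L^{-1}\cap L_Q=MA$. The point to check carefully is that $\nu_Z$ lies in the Lebesgue (smooth) measure class of the manifold $\mathcal O_Z\cong G/H$. I would verify this by writing $Z=G\times_Q(Q/P\times Q/P)$ and using that all the measures involved are smooth and $K$-quasi-invariant.

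\emph{Definition of $\theta$.} Since $MA\subset H$, there is a $G$-equivariant smooth submersion $G/MA\to G/H$, $gMA\mapsto gH$. Transported to the orbits, this reads
$$\theta(gP,gw_0P)=(gP,gw_LP),$$
and we extend $\theta$ arbitrarily on the null set $(G/P\times G/P)\setminus\mathcal O$. Well-definedness follows from $MA\subset H$, and $G$-equivariance is clear. A smooth submersion between homogeneous spaces pulls back Lebesgue-null sets to Lebesgue-null sets and maps a Lebesgue class measure to a Lebesgue class measure, via local product structure on $G\to G/MA\to G/H$. Hence $\theta_*[\nu_P\times\nu_P]=[\nu_Z]$.

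I expect the main obstacle to be the measure-theoretic identification in the second step: showing that the open orbit in $Z$ is $\nu_Z$-conull and that $\nu_Z$ is in the smooth class of $G/H$. I would handle it fibrewise, using $K$-quasi-invariance and the fact that $\nu_y$ is the smooth measure class on $\pi^{-1}(y)\cong Q/P$.
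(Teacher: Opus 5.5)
Your proposal is correct and follows essentially the same route as the paper: you define $\theta$ on the open conull Bruhat orbits as the $G$-equivariant projection $G/MA\to G/(P\cap w_LPw_L^{-1})$, i.e.\ $(gP,gw_0P)\mapsto(gP,gw_LP)$, and use that this submersion carries the smooth measure class to the smooth measure class. Your fibrewise check, via the $K$-equivariant disintegration, that the open $G$-orbit in $Z$ is $\nu_Z$-conull and that $\nu_Z$ lies in its smooth class supplies a step the paper only asserts.
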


\begin{proof}
We construct the map on conull open orbits. Let $w_0$ be the longest Weyl element of $G$, and let $w_Q$ be the longest Weyl element of the Levi Weyl subgroup associated with $Q$. By the Bruhat decomposition,
$$
  \cO=G\cdot(P,w_0P)\subset G/P\times G/P
$$
is the open conull $G$-orbit; see \cite[Theorem 7.40 and Proposition 8.45]{Knapp2002}. Similarly, applying the Bruhat decomposition inside the Levi factor of $Q$, the relative product
$$
  Z=G/P\times_{G/Q}G/P\cong G\times_Q(Q/P\times Q/P)
$$
has an open conull $G$-orbit. Here
$$
  G\times_Q(Q/P\times Q/P):=(G\times(Q/P\times Q/P))/Q,
$$
where $Q$ acts on the right by
$$
  (g,y_1,y_2)q=(gq,q^{-1}y_1,q^{-1}y_2),
$$
and the identification with $Z$ is given by
$$
  [g,y_1,y_2]\longmapsto (gy_1,gy_2).
$$
Thus this open conull orbit is
$$
  \cO_Q=G\cdot(P,w_QP).
$$

Let
$$
  H_0=P\cap w_0Pw_0^{-1},\qquad H_Q=P\cap w_QPw_Q^{-1}.
$$
Then
$$
  \cO\simeq G/H_0,\qquad \cO_Q\simeq G/H_Q.
$$
If $P=M_0A_0N_0$ is a Langlands decomposition of the minimal parabolic, then
$$
  H_0=P\cap w_0Pw_0^{-1}=M_0A_0\subset P\cap w_QPw_Q^{-1}=H_Q.
$$
Hence the quotient map
$$
  G/H_0\longrightarrow G/H_Q,\qquad gH_0\longmapsto gH_Q,
$$
is well defined and $G$-equivariant. Under the above orbit identifications it is
$$
  \theta(gP,gw_0P)=(gP,gw_QP).
$$
The push-forward of the homogeneous smooth measure class on $G/H_0$ is the homogeneous smooth measure class on $G/H_Q$. These measure classes agree with $[\nu_P\times\nu_P]$ on $\cO$ and with $[\nu_Z]$ on $\cO_Q$, respectively. Since $\cO$ and $\cO_Q$ are conull, extending $\theta$ arbitrarily on the null complement of $\cO$ gives a measure-class preserving $G$-factor map
$$
  \theta:(G/P\times G/P,[\nu_P\times\nu_P])\to (Z,[\nu_Z])
$$
in the usual measure-algebra sense.
\end{proof}

\begin{proposition}\label{prop:phase-descent}
For every $s\in\Lambda$, we have $v_s\in \mathcal{U}(L^\infty(G/Q))$. Consequently,
$$
  W_s=L^\infty(G/Q)\quad(s\in\Lambda),
  \qquad
  W_s=0\quad(s\notin\Lambda).
$$
\end{proposition}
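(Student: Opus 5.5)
The plan is to use the cocycle structure of the phases $v_s$, $s\in\Lambda$, together with rigidity of the $\Lambda$-action on $G/P\times G/P$. First, if $\Lambda=\{e\}$ there is nothing to prove; otherwise Margulis' normal subgroup theorem gives $[\Gamma:\Lambda]<\infty$, so $\Lambda$ is again an irreducible lattice in $G$. For $s\in\Lambda$, Proposition~\ref{prop:support-rigidity} gives $p_s=1$. Hence $v_s\in\mathcal U(L^\infty(G/P))$ and $|D_s|=1$ a.e. on $Z$. By the remark preceding Lemma~\ref{lem:relative-bruhat-factor}, it suffices to show $D_s=1$ $\nu_Z$-a.e.

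\textbf{Step 1: relations for the $D_s$.} I translate Lemma~\ref{lem:slice-relations} into relations for the $D_s$.
\begin{itemize}
\item From (5) and Proposition~\ref{prop:rank-one-slices} we get $v_s\sigma_s(v_t)=a_{s,t}v_{st}$ with $a_{s,t}\in\mathcal U(L^\infty(G/Q))$.
\item From (4) we get $\sigma_g(v_s)=c_{g,s}v_{gsg^{-1}}$ with $c_{g,s}\in\mathcal U(L^\infty(G/Q))$.
\end{itemize}
Factors lying in $L^\infty(G/Q)$ take equal values at $x_1$ and $x_2$ whenever $(x_1,x_2)\in Z$. They therefore cancel in $D$, and we obtain, a.e. on $Z$,
$$
D_{st}(z)=D_s(z)\,D_t(s^{-1}z),\qquad D_{gsg^{-1}}(gz)=D_s(z)\qquad(s,t\in\Lambda,\ g\in\Gamma).
$$
Thus $\alpha(s,z):=D_s(z)$ is a measurable $\mathbb T$-valued cocycle for $\Lambda\curvearrowright Z$, and this cocycle is moreover $\Gamma$-conjugation invariant. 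Pulling it back along the factor map $\theta$ of Lemma~\ref{lem:relative-bruhat-factor} gives a $\mathbb T$-valued cocycle $\tilde\alpha=\alpha\circ\theta$ for $\Lambda\curvearrowright(G/P\times G/P,\nu_P\times\nu_P)$. Proving $\tilde\alpha\equiv1$ suffices, since $\theta$ is measure-class preserving and onto the conull orbit $\mathcal O_Q$.

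\textbf{Step 2: rigidity of the cocycle.} Here I would use that $\Lambda\curvearrowright G/P\times G/P\simeq G/M_0A_0$ is ergodic (double ergodicity of the Poisson boundary for the lattice $\Lambda$), and then apply Zimmer's cocycle superrigidity for compact abelian targets / property (T) of $\Lambda$. This shows that $\tilde\alpha$ is cohomologous to a homomorphism $\chi:\Lambda\to\mathbb T$. Since $\Lambda$ has property (T) and finite abelianization, $\chi$ has finite image, so after passing to the finite index normal subgroup $\Lambda_0=\ker\chi$ we get $\tilde\alpha(s,\cdot)=f\cdot\overline{f\circ s^{-1}}$ for some measurable $f:G/P\times G/P\to\mathbb T$.

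\textbf{Step 3: forcing $f$ to be trivial.} On the other hand, $D_s$ has the explicit split form $v_s(x_1)\overline{v_s(x_2)}$. Restricting to the diagonal of $Z$ gives $D_s=1$ there, but the diagonal is $\nu_Z$-null, so this must instead be exploited through the $G$-structure. I would compare the two expressions and use the conjugation invariance $D_{gsg^{-1}}\circ g=D_s$, combined with ergodicity of $\Gamma$ on $\mathcal O_Q\simeq G/H_Q$, to show that $f$ can be taken $\Lambda_0$-invariant, hence constant. This yields $D_s=1$ for $s\in\Lambda_0$, so $v_s\in L^\infty(G/Q)$ for $s\in\Lambda_0$. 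For general $s\in\Lambda$, a power $s^k$ lies in $\Lambda_0$; I would then use $W_s\sigma_s(W_t)\subset W_{st}$ together with ergodicity to pass from $\Lambda_0$ to all of $\Lambda$.

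The final claim follows directly: $W_s=L^\infty(G/Q)v_s=L^\infty(G/Q)$ for $s\in\Lambda$, while $W_s=L^\infty(G/Q)p_sv_s=0$ for $s\notin\Lambda$.

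The main obstacle is Step 2/3, namely killing the $\mathbb T$-valued cocycle. If superrigidity proves too heavy, my first fallback is an isometric-action argument in the spirit of the proof of Theorem~\ref{E(M)=C nontrivial}. The idea would be to encode $z\mapsto(D_s(z))_{s}$ as a $\Gamma$-equivariant map into a separable metric space with an isometric $\Gamma$-action, for example unitary multiplication operators on $\ell^2(\Lambda)$ modulo a finite trace, and then invoke metric ergodicity of $\Gamma\curvearrowright G/P\times G/P$ via $\theta$. The risk, already flagged in the acknowledgements, is that double \emph{metric} ergodicity is not available in general. This fallback therefore has to be set up so that only metric ergodicity of the single boundary, or plain double ergodicity, is used.
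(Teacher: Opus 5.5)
Your Step 1 matches the paper's setup, but Steps 2--3 have a genuine gap. No ``cocycle superrigidity for compact abelian targets'' applies here. By the standard induction correspondence, $\mathbb{T}$-valued cocycles of $\Lambda\car G/P\times G/P\simeq G/M_0A_0$ correspond to cocycles of the amenable group $M_0A_0$ acting on $G/\Lambda$. Such cocycles are in general very far from being cohomologous to characters, and property (T) of $\Lambda$ does not change this. So a general cocycle argument cannot work, and everything has to come from the conjugation identity you wrote down. Step 3, which is where that identity should enter, gives no mechanism: proving that $f$ ``can be taken $\Lambda_0$-invariant'' is equivalent to proving $D_s=1$, which is the claim itself. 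The passage from $\Lambda_0$ back to $\Lambda$ is also unsupported. Knowing $v_{s^k}\in L^\infty(G/Q)$ only yields $D_s\,\sigma_s(D_s)\cdots\sigma_{s^{k-1}}(D_s)=1$, not $D_s=1$.

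The missing idea is an elementary computation that uses both of your identities with $g=r\in\Lambda$, writing $\sigma_g(F)=F\circ g^{-1}$ on $Z$. Expand $D_{rs}$ once as $D_{r\cdot s}$ and once as $D_{(rsr^{-1})\cdot r}$:
\[
D_r\,\sigma_r(D_s)=D_{rs}=D_{rsr^{-1}}\,\sigma_{rsr^{-1}}(D_r)=\sigma_r(D_s)\,\sigma_{rsr^{-1}}(D_r).
\]
Cancelling the unitary $\sigma_r(D_s)$ gives $D_r=\sigma_{rsr^{-1}}(D_r)$ for all $s\in\Lambda$. So each $D_r$ is $\Lambda$-invariant, not merely a cocycle.

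From here only plain ergodicity is needed. Since $\Lambda$ has finite index, it is an irreducible lattice with a measure $\mu_\Lambda$ whose Poisson boundary is $(G/P,\nu_P)$. Double ergodicity makes $\Lambda\car G/P\times G/P$ ergodic, hence also its factor $Z$ via $\theta$, so $D_r\equiv c_r\in\mathbb{T}$. The split form of $D_r$ gives $D_r(x_1,x_2)D_r(x_2,x_3)=D_r(x_1,x_3)$ on the relative triple product $G/P\times_{G/Q}G/P\times_{G/Q}G/P$. Hence $c_r^2=c_r$, so $c_r=1$.

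This last step is the rigorous form of your ``diagonal'' intuition. The argument treats every $r\in\Lambda$ at once, with no passage to $\ker\chi$, and it never needs metric ergodicity.
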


\begin{proof}
If $\Lambda=\{e\}$, the assertion follows from $v_e=1$. Assume that $\Lambda\neq\{e\}$. Then $\Lambda$ has finite index in $\Gamma$.

For every $s\in\Lambda$, Proposition~\ref{prop:support-rigidity} gives $p_s=1$. Hence Proposition~\ref{prop:rank-one-slices} gives
$$
  v_s\in\mathcal U(L^\infty(G/P)),
  \qquad
  W_s=L^\infty(G/Q)v_s.
$$

For $s,t\in\Lambda$, the multiplication relation $W_s\sigma_s(W_t)\subset W_{st}$ gives a unitary $\omega_{s,t}\in\mathcal U(L^\infty(G/Q))$ such that
\begin{equation}\label{vv=v}
  v_s\sigma_s(v_t)=\omega_{s,t}v_{st}.
\end{equation}

Likewise, for $g\in\Gamma$ and $s\in\Lambda$, normality of $\Lambda$ and the conjugacy relation $\sigma_g(W_s)=W_{gsg^{-1}}$ give a unitary $\eta_{g,s}\in\mathcal U(L^\infty(G/Q))$ such that
\begin{equation}\label{v=v}
  \sigma_g(v_s)=\eta_{g,s}v_{gsg^{-1}}.
\end{equation}

Let $
  Z=G/P\times_{G/Q}G/P
$ and
$$
  D_s(x_1,x_2)=v_s(x_1)\overline{v_s(x_2)}
  \qquad ((x_1,x_2)\in Z).
$$
be as above. Then $D_s$ is a unitary in $L^\infty(Z)$ and $D_e=1$. By \eqref{vv=v} and \eqref{v=v}, we have that for all $s,t\in\Lambda$ and $g\in\Gamma$,
\begin{equation}\label{eq:phase-identities}
  D_{st}=D_s\sigma_s(D_t),
  \qquad
  D_{gsg^{-1}}=\sigma_g(D_s).
\end{equation}
where $\sigma_g(F)(x_1,x_2)=F(g^{-1}x_1,g^{-1}x_2)$.

Fix $r\in\Lambda$. For $s\in\Lambda$, applying the first identity in \eqref{eq:phase-identities} to the two decompositions
$$
  rs=r\cdot s=(rsr^{-1})\cdot r
$$
and using the second identity gives
$$
  D_r\sigma_r(D_s)
  =D_{rs}
  =D_{rsr^{-1}}\sigma_{rsr^{-1}}(D_r)
  =\sigma_r(D_s)\sigma_{rsr^{-1}}(D_r).
$$
Cancelling the unitary $\sigma_r(D_s)$ yields
$$
  D_r=\sigma_{rsr^{-1}}(D_r).
$$
Since $rsr^{-1}$ ranges over $\Lambda$, the function $D_r:Z\to \C$ is $\Lambda$-invariant.

Since $[\Gamma:\Lambda]<+\infty$, $\Lambda<G$ is also an irreducible lattice. By \cite[the proof of Proposition 2.9]{BCFS25} and \cite[Main Theorem]{BL96}, there exists a symmetric generating measure $\mu_\Lambda\in\prob(\Lambda)$ such that the $(\Lambda,\mu_\Lambda)$-Poisson boundary is exactly $(G/P,\nu_P)$. By the double ergodicity of Poisson boundaries \cite{Ka03}, $\Lambda\car G/P\times G/P$ is ergodic. Moreover, by Lemma \ref{lem:relative-bruhat-factor}, as a $\Lambda$-factor of $G/P\times G/P$, $\Lambda\curvearrowright(Z,\nu_Z)$ is also ergodic.

By the ergodicity, it follows that $D_r=c_r$ almost everywhere for some $c_r\in\mathbb T$. On the relative triple product $G/P\times_{G/Q}G/P\times_{G/Q}G/P$, we also have
$$
  D_r(x_1,x_2)D_r(x_2,x_3)=D_r(x_1,x_3).
$$
Therefore $c_r^2=c_r$, and so $c_r=1$. Thus
$$
  v_r(x_1)=v_r(x_2)
$$
for almost every pair in the same $G/Q$-fibre. By the standard fibrewise characterization of pullback functions, obtained from the above disintegration, $v_r\in\pi^*L^\infty(G/Q)=L^\infty(G/Q)$. Since $r\in\Lambda$ was arbitrary, the first assertion follows. Proposition~\ref{prop:rank-one-slices} and Proposition~\ref{prop:support-rigidity} now give the stated description of all $W_s$.
\end{proof}

We are now ready to prove Theorem~\ref{E(M) not C}.
\begin{proof}[Proof of Theorem~\ref{E(M) not C}]
Let
$$\Lambda=\{s\in\Gamma \mid W_s=L^\infty(G/Q)\}\subset\Gamma.$$
By Proposition~\ref{prop:phase-descent} and the algebraic relations (1-6) in Lemma~\ref{lem:slice-relations}, $\Lambda$ is a normal subgroup of $\Gamma$ and $W_s=0$ for any $s\in\Gamma\setminus\Lambda$.

By Lemma \ref{lem:slice-relations} and the definition of $W_s$, we have
$$L^\infty(G/Q)\rt_{\mathrm{alg}}\Lambda=\bigoplus _{s\in\Lambda} W_su_s\subset M \subset \overline{\bigoplus _{s\in\Gamma} W_su_s}^{\mathrm{w}}=L^\infty(G/Q)\rtimes\Lambda.$$
Therefore, $M=L^\infty(G/Q)\rtimes\Lambda$.
\end{proof}

\section{Proof of the main result}
\begin{proof}[Proof of Theorem~\ref{thmA}]

Assume that $M\cap L(\Gamma)\not =\C 1$ or $M\cap L^\infty(G/P)\not =\C 1$. Let
$$
  E:L^\infty(G/P)\rtimes \Gamma\to L^\infty(G/P)
$$
be the canonical conditional expectation. By Lemma~\ref{Lemma AH} {\cite[Lemma 5.5]{AH24}}, there exists a parabolic subgroup $Q$ with $P\leq Q\leq G$ such that
$$
  E(M)=L^\infty(G/Q)=     M\cap L^\infty(G/P).
$$

If $Q\lneq G$, then Theorem~\ref{E(M) not C} gives
$$
  M=L^\infty(G/Q)\rtimes \Lambda 
$$
for a normal subgroup $\Lambda\lhd\Gamma$.

Assume now that $Q=G$. Then
$$
  E(M)=M\cap L^\infty(G/P)=\C1.
$$
The alternative $M\cap L^\infty(G/P)\not =\C 1$ cannot hold. Hence we must have $M\cap L(\Gamma)\not =\C 1$. By \cite{KP23}, there exists a non-trivial normal subgroup $\{e\}\not=\Lambda\lhd\Gamma$ such that
$$
 M\cap L(\Gamma)=L(\Lambda).
$$
Theorem \ref{E(M)=C nontrivial} therefore gives
$$
  M=L(\Lambda),
$$
which finishes the proof.
\end{proof}

\bibliographystyle{alphaurl}
\bibliography{ref}
\end{document}